\def \dbigcup {\displaystyle\bigcup }
\def \dsum {\displaystyle\sum }
\def \limits { }
\newtheorem{theorem}{Theorem}
\newtheorem{lemma}[theorem]{Lemma}
\newenvironment{proof}[1][Proof]{\noindent\textbf{#1.} }{\ \rule{0.5em}{0.5em}}
\begin{document}

\title{A Combinatorial Problem Related to Sparse Systems of Equations}
\author{Peter Horak $^{1}$\thanks{%
Research of P. Horak was supported in part by a grant from SIAS, University
of Washington, Tacoma.} \ \ Igor Semaev $^{2}$\thanks{%
Research of P. Horak and I. Semaev was supported in part by a grant SPIRE
program in 2013-2015 from University of Bergen. Research of I. Semaev was
also partly supported by the EEA Grant SK06-IV-01-001, and the state budget
of the Slovak Republic from the EEA Scholarship Programme Slovakia.} \ \
Zsolt Tuza $^{3,4}$\thanks{%
Research of Zs.~Tuza was supported in part by the grant T\'{A}%
MOP-4.2.2.B-15/1/KONV-2015-0004.} \\
{\normalsize $^{1}$ School of Interdisciplinary Arts \& Sciences, University
of Washington,}\\
{\normalsize Tacoma, USA }\\
$^{2}$ {\normalsize Department of Informatics, University of Bergen, Norway}%
\\
{\normalsize $^{3}$ Alfr\'{e}d R\'{e}nyi Institute of Mathematics, Hungarian
Academy of Sciences,}\\
{\normalsize Budapest, Hungary }\\
{\normalsize $^{4}$ University of Pannonia, Veszpr\'{e}m, Hungary }}
\date{}
\maketitle

\begin{abstract}
Nowadays sparse systems of equations occur frequently in science and
engineering. In this contribution we deal with sparse systems common in
cryptanalysis. Given a cipher system, one converts it into a system of
sparse equations, and then the system is solved to retrieve either a key or
a plaintext. Raddum and Semaev proposed new methods for solving such sparse
systems. It turns out that a combinatorial MaxMinMax problem provides bounds
on the average computational complexity of sparse systems. In this paper we
initiate a study of a linear algebra variation of this MaxMinMax problem.
\end{abstract}

\bigskip

\textbf{MSC: } 05C65, 68Q25, 94A60

\textbf{Keywods}: sparse systems of equations; MaxMinMax problem; gluing
algorithm.

\section{Introduction}

Sparse objects such as sparse matrices and sparse systems of (non-)linear
equations occur frequently in science and engineering. Nowadays sparse
systems are often studied in algebraic cryptanalysis as well. First, given a
cipher system, one converts it into a system of equations. Second, the
system of equations is solved to retrieve either a key or a plaintext. As
pointed out in \cite{C}, this system of equations will be sparse, since
efficient implementations of real-word systems require a low gate count.

There are plenty of papers on methods for solving a sparse system of
equations

\begin{equation}
f_{i}(X_{i})=0(1\leq i\leq m)  \label{AA}
\end{equation}%
over $GF(q).$ The worst case complexity bounds on (\ref{AA}) are attained in
the case of sparse systems describing the SAT problem. These bounds are
exponential with respect to the number of unknowns in (\ref{AA}). For
example, in the case of the binary field, the bounds are $2^{cn},$ where the
constant $c$ is close to $1,$ and depends on the size of $\left\vert
X_{i}\right\vert ^{\prime }$s, see \cite{I}. In \cite{Igor2} the so-called
Gluing Algorithm was designed to solve such systems over any finite field $%
GF(q)$. If the set $S_{k}$ of solutions of the first $k$ equations together
with the next equation $f_{k+1}=0$ is given then the algorithm constructs
the set $S_{k+1}.$ It is shown there that the average complexity of finding
all solutions to the original system is $O(mq^{\max \left\vert \cup
_{1}^{k}X_{j}\right\vert -k}),$ where $m$ is the total number of equations,
and $\cup _{1}^{k}X_{j}$ is the set of all unknowns actively occurring in
the first $k$ equations. Clearly, the complexity of finding all solutions to
the system by the Gluing Algorithm depends on the order of equations. Hence
one is interested to find a permutation $\pi $ that minimizes the average
complexity, and also to describe the worst-case scenario, i.e., the system
of equations for which the average complexity of the method is maximum.
Therefore, Semaev \cite{Igor} suggested to study the following combinatorial
MaxMinMax problem.

Let $\mathcal{S}_{n,m,t}$ be the family of all collections of sets $\mathcal{%
X=}\{X_{1},...,X_{m}\},$ where the $X_{i}$ are subsets of an underlying $n$%
-set $X$, and $\left\vert X_{i}\right\vert \leq t$ holds for all $i\in
\lbrack m]$\thinspace ;\thinspace\ we allow that some set may occur in $%
\mathcal{X}$ more than once. Then we define 
\begin{equation}
f_{t}(n,m):=\max_{\mathcal{X}}\min_{\pi }\max_{1\leq k\leq m}(\left\vert
\dbigcup\limits_{i=1}^{k}X_{\pi (i)}\right\vert -k)  \label{1}
\end{equation}%
where the minimum runs over all permutations $\pi $ on $[m],$ and the
maximum is taken over all families $\mathcal{X}$ in $\mathcal{S}_{n,m,t}.$

In \cite{HT} the authors confined themselves to the case $\left\vert
X_{i}\right\vert \leq 3$ for all $i\in \lbrack m]$. It was shown there that,
for $n\geq 2$ and all $m\leq n-1,$ $f_{2}(n,m)$ equals the maximum number of
non-trivial components in a simple forest on $n$ vertices with $m$ edges;
otherwise $f_{2}(n,m)=1.$ The main result of that paper claims that $%
f_{3}(n,n)$ grows linearly. More precisely, the following estimates are
valid.

\begin{theorem}
\label{bounds} For all\/ $n$ sufficiently large, $f_{3}(n,n)\geq \frac{n}{%
12.2137}$ holds, while\/ $f_{3}(n,n)\leq \left\lceil \frac{n}{4}\right\rceil
+2 $ for all\/ $n\geq 3$.
\end{theorem}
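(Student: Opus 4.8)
Let me think about how to prove both bounds in Theorem~\ref{bounds}.

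For the **upper bound** $f_3(n,n) \le \lceil n/4\rceil + 2$: Given any family $\mathcal{X} = \{X_1,\dots,X_n\}$ of subsets of an $n$-set with $|X_i|\le 3$, I need to exhibit a permutation $\pi$ so that every prefix union $|\bigcup_{i\le k} X_{\pi(i)}| - k$ stays small. The natural idea is a greedy strategy: repeatedly pick the set that adds the fewest new elements. The quantity $|\bigcup_{i\le k} X_{\pi(i)}| - k$ increases by at most $2$ when a set of size $\le 3$ introducing at least one old element is added, and does not increase (in fact drops) when a set introduces $\le 1$ new element. One must argue that one cannot be forced to take "bad" sets (those adding $2$ or $3$ new elements) too often before the accumulated overlaps let the counter come back down. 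A hypergraph/counting argument comparing the total number of element-occurrences ($\le 3n$) against $n$ sets on $n$ vertices should force enough sets to be "cheap"; the $\lceil n/4\rceil$ shape suggests partitioning the process into phases and showing each phase of four steps nets an increase of at most $1$. The main obstacle here is controlling the order in which cheap and expensive sets become available — a purely static count is not enough, one needs an exchange/amortization argument over the run of the greedy algorithm.

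For the **lower bound** $f_3(n,n) \ge n/12.2137$: here I must construct a single family $\mathcal{X}$ on which every permutation yields a large MaxMinMax value. The strategy is to build $\mathcal{X}$ out of many disjoint copies of a small gadget $G$ on $g$ vertices with $g$ edges (triples) such that, for any ordering of the triples within one gadget, some prefix has union-minus-count at least $c$. If the gadgets are vertex-disjoint, their contributions add: interleaving orderings across gadgets only helps, because the worst prefix of the whole family dominates the worst prefix obtained by finishing one gadget before starting the next. So $f_3(n,n)\ge \lfloor n/g\rfloor \cdot c \approx (c/g)\, n$, and one optimizes the gadget to make $g/c$ as close to $12.2137$ as possible. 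The constant $12.2137$ is clearly the outcome of such an optimization (possibly with a recursive or weighted construction, or a fractional relaxation solved by linear programming), so I expect the gadget to be somewhat intricate rather than a single clean graph.

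The harder direction is the **lower bound**, because it requires both the right gadget and a clean argument that disjoint union is "superadditive" for this MaxMinMax functional — i.e. that an adversary permutation cannot defeat the gadgets by cleverly shuffling them together. I would isolate that superadditivity as a lemma first: \emph{if $\mathcal X = \mathcal X_1 \sqcup \dots \sqcup \mathcal X_r$ with pairwise disjoint vertex sets, then $\min_\pi \max_k(\cdots) \ge \sum_j \min_{\pi_j}\max_{k_j}(\cdots)$} — and then reduce the theorem to analyzing one optimized gadget. For the gadget analysis itself, I expect to track, for each prefix, the number of vertices touched versus triples used, and to show via a discharging/potential argument that the deficit cannot be kept below the target threshold for any internal ordering; getting the numeric constant to exactly $1/12.2137$ will be the delicate computational core.
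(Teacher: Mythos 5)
Your lower-bound strategy rests on a ``superadditivity'' lemma that is false, and this is the central gap. Suppose $\mathcal X$ is a disjoint union of gadgets $\mathcal X_1,\dots,\mathcal X_r$ on pairwise disjoint vertex sets, each with $g$ vertices and $g$ triples whose union is the whole gadget. The minimizing permutation simply lists the gadgets one after another, using inside each gadget its own best internal order. Once gadget $j$ is completely listed it contributes $g-g=0$ to $\bigl|\bigcup_{i\le k}X_{\pi(i)}\bigr|-k$, and an unstarted gadget contributes $0$; so at every prefix only the one currently active gadget contributes, and $\min_\pi\max_k(\cdots)\le\max_j c_j$, a constant independent of $n$. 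The contributions do not add --- interleaving is not what the adversary does; sequential processing already kills the bound. Allowing gadgets with $e_j\ne g_j$ sets does not rescue this: since $\sum_j(g_j-e_j)\le 0$ and each term is bounded by the gadget size $G$, the adversary can order the gadgets so that every partial sum of the $g_j-e_j$ stays at most $G$ (whenever the running sum is positive, some remaining term is nonpositive), giving a global deficit of $O(1)$. This obstruction is exactly why the actual construction behind the $n/12.2137$ bound (it comes from \cite{HT}; the present paper only quotes the theorem, and describes the construction as property $(*)$ in its last proof, crediting discussions on expanders) is a single, globally connected random system: one takes $X_i=\{i,\sigma(i),\tau(i)\}$ for two independent uniform random permutations $\sigma,\tau$ of $[n]$ and proves by a first-moment/entropy estimate that with positive probability \emph{every} set of $\lfloor c'n\rfloor$ of these triples has union at least $\lfloor c'n\rfloor+c''n$. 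That expansion property holds for every prefix of every permutation simultaneously and yields $f_3(n,n)\ge c''n$ at the single checkpoint $k=\lfloor c'n\rfloor$; the constant $12.2137$ comes from optimizing $c',c''$ in that probabilistic estimate, not from a gadget. No family built from bounded disjoint pieces can have such an expansion property.

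On the upper bound, your greedy-plus-four-step-amortization outline points in a plausible direction, but as written it is a plan rather than a proof: the step you yourself identify as the obstacle (controlling when cheap sets become available, i.e.\ why one cannot be forced to take $\Theta(n)$ consecutive sets that each add two or three new vertices) is the entire content of the argument, and you supply no mechanism for it. Note also that this paper does not prove Theorem \ref{bounds} at all; both bounds are imported from \cite{HT}, so there is no in-paper proof to compare against beyond the construction sketched above.
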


Later, an asymptotically better upper bound was proved in \cite{Igor}; we
note that the proof of the bound is algorithmic, and the needed permutation $%
\pi $ is constructed in polynomial time.

\begin{theorem}
For all\/ $n$, $f_{3}(n,n)\leq \frac{n}{5.7883}+1+2\log _{2}n$.
\end{theorem}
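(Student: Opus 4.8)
The plan is to build the permutation $\pi$ algorithmically, and it is most convenient to describe it through the \emph{reverse} process: starting from the hypergraph $H=(X,\mathcal X)$ with $n$ edges of size $\le 3$ and at most $n$ vertices, we delete the edges one at a time and read off $\pi$ as the reverse of the deletion order. (Edges of size $\le 1$ may simply be put at the end of $\pi$, where they never raise a partial maximum, and edges of size $2$ only help, so the hard case is when $H$ is essentially $3$-uniform.) For a residual edge set $F$ write $\varphi(F)=|V(F)|-|F|$, where $V(F)$ is the vertex set covered by $F$; the quantity we must keep small is the maximum of $\varphi$ over all residual hypergraphs met during the deletion process, since that maximum equals $\min_\pi\max_k\bigl(|\cup_{i\le k}X_{\pi(i)}|-k\bigr)$ for the chosen $\pi$. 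The basic book-keeping is that deleting an edge $e$ changes $\varphi$ by $1-j_e$, where $j_e$ is the number of vertices of $e$ whose current degree is $1$. Thus a \emph{leaf deletion} (removing an edge that meets a vertex of degree $\le1$) never increases $\varphi$; the only moves that can increase $\varphi$ are deletions of edges lying entirely inside the current $2$-core, and the whole problem is to schedule the deletions so that these costly moves are rare and are always followed by long runs of leaf deletions.

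I would run the deletion process in phases. One phase consists of: (i) performing leaf deletions until the residual hypergraph equals its $2$-core $C$ (if $C=\varnothing$ we stop); then (ii) a \emph{strike}: deleting a carefully chosen set of edges of $C$ and immediately re-peeling, so that the $2$-core of what remains has at most half as many vertices as $C$. Step (i) can only lower $\varphi$. The technical core is a lemma controlling step (ii): in any $3$-uniform multi-hypergraph of minimum degree $\ge2$ one can choose the struck edges and order all deletions of the phase so that $\varphi$ rises by at most $\tfrac{1}{5.7883}\,v_{\text{phase}}+2$ above its value at the start of the phase, where $v_{\text{phase}}$ is the number of vertices eliminated in the phase. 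I would prove this by a discharging/exchange argument on the $2$-core: because it is dense, it contains an abundance of ``cheap'' local configurations — degree-$2$ vertices, repeated (parallel) edges, short hyper-cycles — such that deleting one, at cost $+1$, triggers a cascade of leaf deletions that clears many more edges before $\varphi$ can climb again; counting how many vertices are cleared per unit rise of $\varphi$ and optimising the trade-off in this exchange over the relevant local parameters is exactly where the (irrational) constant $5.7883$ is produced, the additive $+2$ covering the one or two genuinely expensive deletions needed to ignite the first cascade of the phase plus rounding.

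Finally I would assemble the phases. Since each phase halves the vertex count of the current $2$-core, there are at most $\log_2 n$ phases, so the $+2$ terms contribute at most $2\log_2 n$ in total; and since the vertex sets eliminated in distinct phases are pairwise disjoint, $\sum_{\text{phases}} v_{\text{phase}}\le n$, so the main terms contribute at most $\tfrac{n}{5.7883}$. At any moment $\varphi$ is at most the sum of the phase-by-phase rises up to that point, hence at most $\tfrac{n}{5.7883}+2\log_2 n$; the extra additive $1$ absorbs the base case and the fact that initially $\varphi\le 0$, and the whole construction (leaf-peeling and locating the cheap configurations) runs in polynomial time. The main obstacle is the strike lemma of step (ii): one must show that a dense $2$-core always has enough cheap structure even in its worst configurations — the near-$3$-regular, large-girth case is the extremal one, and Fano-plane type examples already show the coefficient cannot be driven to $0$ — and the exchange optimisation must be carried out sharply enough to land on $5.7883$ rather than a worse constant. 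A secondary technical point is the book-keeping at phase boundaries: the $2$-core entering a phase need not satisfy $|E|\ge|V|$ any longer (its deficiency is only bounded by the $\varphi$ spent so far), so the lemma has to be formulated for arbitrary $2$-cores with controlled deficiency, and one must verify that this leakage stays within the stated $1+2\log_2 n$ lower-order error.
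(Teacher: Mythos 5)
This theorem is not proved in the paper at all: it is quoted from Semaev's article \cite{Igor}, where the bound is established by a polynomial-time algorithm constructing the permutation. So there is no in-paper proof to match your argument against; I can only judge your proposal on its own terms.

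Your framework is sound as far as it goes: the reverse-deletion viewpoint, the identity that removing an edge $e$ changes $\varphi(F)=|V(F)|-|F|$ by $1-j_e$ (so leaf deletions are free and only deletions inside the current $2$-core cost anything), the disjointness of the vertex budgets across phases, and the $2\log_2 n$ additive term coming from $O(\log n)$ phases each paying $+2$ --- all of this is correct bookkeeping and matches the general shape of the known algorithmic proof. But the proposal has a genuine gap, and it is exactly where the entire content of the theorem lives: the ``strike lemma'' of step (ii). You assert, without argument, that in any $3$-uniform multi-hypergraph of minimum degree at least $2$ one can delete and re-peel so that $\varphi$ rises by at most $v_{\mathrm{phase}}/5.7883+2$ while the surviving $2$-core loses at least half its vertices. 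Nothing in the proposal establishes the existence of the ``cheap configurations,'' quantifies the cascade of leaf deletions each one triggers, carries out the discharging, or performs the optimisation that is supposed to output the constant $5.7883$; you candidly label this ``the main obstacle.'' A proof whose only nontrivial step is deferred to an unproved lemma is not a proof. The halving claim itself is also a strong structural assertion that needs justification (why should a single bounded-cost strike always cut the $2$-core in half, rather than, say, merely removing a constant fraction of edges?), and, as you note yourself at the end, the lemma must be stated for $2$-cores with a controlled deficiency $|E|<|V|$, which changes the density hypotheses on which the whole ``abundance of cheap configurations'' heuristic relies. Until the strike lemma is stated precisely and proved with the claimed constant, the argument does not establish the bound.
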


As a corollary we get: Let $\mathcal{X}$ be fixed. If $|X_{i}|\leq 3$, $m=n$%
, then the average complexity of finding all solutions in $GF(q)$ to
polynomial equation system (\ref{AA}) is at most $q^{\frac{n}{5.7883}+O(\log
n)}$ for arbitrary $\mathcal{X}$ and $q$. \bigskip

In \cite{RS} a new method for representing and solving systems of algebraic
equations common in cryptanalysis has been proposed. This method differs
from the others in that the equations are not represented as multivariate
polynomials, but as a system of Multiple Right-Hand Sides (MRHS) linear
equations. The results overcome significantly what was previously achieved
with Gr\"{o}bner Basis related algorithms. We point out that equations
describing the Data Encryption Standard (DES) \cite{R} and the Advanced
Encryption Standard (AES) can be expressed in MRHS form as well.

AES is likely the most commonly used symmetric-key cipher; AES became
effective as a US federal government standard on May 26, 2002 after approval
by the Secretary of Commerce. It is the first publicly accessible and open
cipher approved by the National Security Agency (NSA) for top secret
information when used in an NSA-approved cryptographic module. An
application of MRHS equations enables one to improve on the linear
cryptanalysis of DES \ \cite{iS14}.

Let $X$ be a column $n$-vector of unknowns over $GF(q)$. Then an MRHS system
of equations is a system of inclusions 
\begin{equation}
A_{i}X\in \{b_{i_{1}},\ldots ,b_{i_{s_{i}}}\},\quad i=1,\ldots ,m,
\label{system}
\end{equation}%
where the $A_{i}$ are matrices over $GF(q)$ of size $t_{i}\times n$ and of
rank $t_{i}$, and the $b_{ij}$ are column vectors of length $t_{i}$. An $%
X=X_{0}$ is a solution to (\ref{system}) if it satisfies all inclusions in (%
\ref{system}). Methods to solve an arbitrary MRHS system of equations were
introduced in [6] as well. \ 

One of the main goals of our paper is to get asymptotic bounds on the
average complexity of solving (\ref{system}). As noted by Semaev, such
bounds can be obtained by studying a generalization of the combinatorial
problem described in (\ref{1}). {In particular, when matrices $A_{i}$ are
fixed and the right hand side columns in \textup{(\ref{system})} are
generated according to a probabilistic model presented in the next section,
we will prove }

\begin{theorem}
\label{Theorem_3} Let\/ $t_{i}\leq t$ for some fixed\/ $t$ and\/ $n$ tends
to infinity. Then the average complexity of solving\/ \textup{(\ref{system})}
is 
\begin{equation*}
O(m\,q^{n-\lceil n/t\rceil }).
\end{equation*}
\end{theorem}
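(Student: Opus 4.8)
The plan is to run the natural ``gluing'' (agreeing) algorithm for MRHS systems and to bound its expected cost over the random right-hand sides; the key point is that the rank of the stacked left-hand side after $k$ steps is at most $\min(n,kt)$. Throughout, $V_i$ denotes the row space of $A_i$, so $\dim V_i=t_i\le t$, and $\pi$ is any fixed ordering of the $m$ equations; put $d_k:=\dim(V_{\pi(1)}+\cdots+V_{\pi(k)})$ and $D_k:=\sum_{i=1}^{k}t_{\pi(i)}$.

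First I would describe the algorithm. Processing the equations in the order $\pi$, after $k$ steps one keeps a single MRHS equation $B_kX\in L_k$, where $B_k$ has row space $V_{\pi(1)}+\cdots+V_{\pi(k)}$ and rank $d_k$, and $L_k\subseteq GF(q)^{d_k}$ lists the admissible right-hand sides -- i.e.\ the cosets of $\ker B_k$ still containing partial solutions, so $|L_k|$ is the analogue of the quantity $q^{\,|\cup_{1}^{k}X_j|-k}$ of (\ref{1}). One step merges $(B_{k-1},L_{k-1})$ with the new equation $A_{\pi(k)}X\in\{b_{\pi(k),1},\dots,b_{\pi(k),s_{\pi(k)}}\}$: compute the space of linear dependencies between the rows of $B_{k-1}$ and those of $A_{\pi(k)}$, of dimension $r_k:=d_{k-1}+t_{\pi(k)}-d_k\le t$; hash the two right-hand side lists by their images in this $r_k$-dimensional ``overlap'' space; and pair up entries with equal image. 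This produces $L_k$ after $O(|L_{k-1}|+s_{\pi(k)}+|L_k|)$ operations on vectors (each a projection, done with $\mathrm{poly}(n,\log q)$ field operations). Summing over $k$ (with $|L_0|=1$), the algorithm costs $O\bigl(\sum_{k=1}^{m}|L_k|+\sum_{i=1}^{m}s_i\bigr)$, so it remains to bound $\mathbb{E}\,|L_k|$.

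The probabilistic step is the heart of the matter. A right-hand side tuple $(b_{\pi(1),j_1},\dots,b_{\pi(k),j_k})$ lies in $L_k$ exactly when the stacked vector in $GF(q)^{D_k}$ lies in the column space of $B_k$, a subspace of dimension $d_k$ (column rank $=$ row rank). Since in the probabilistic model each right-hand side is uniformly distributed, by symmetry each of the $\prod_{i=1}^{k}s_{\pi(i)}$ tuples lands in that subspace with probability $q^{\,d_k-D_k}$, so
\[
\mathbb{E}\,|L_k|=\Bigl(\textstyle\prod_{i=1}^{k}s_{\pi(i)}\Bigr)q^{\,d_k-D_k}\le q^{\,D_k-k}\,q^{\,d_k-D_k}=q^{\,d_k-k},
\]
the inequality using that the model makes each equation nontrivial, $s_i\le q^{\,t_i-1}$. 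Finally $d_k\le\min(n,D_k)\le\min(n,kt)$, and $\min(n,kt)-k\le n-\lceil n/t\rceil$ for every integer $k\ge1$: this function equals $k(t-1)$ while $kt\le n$ and equals $n-k$ once $kt\ge n$, so its maximum is attained around $k=n/t$ and equals $n-\lceil n/t\rceil$. Hence $\mathbb{E}\,|L_k|\le q^{\,n-\lceil n/t\rceil}$ for all $k$, and since $\sum_i s_i\le m\,q^{\,t-1}=O(m)$ (as $t$ is fixed), linearity of expectation gives expected cost $O\bigl(m\,q^{\,n-\lceil n/t\rceil}\bigr)$, as claimed.

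The main obstacle I foresee is making the estimate for $\mathbb{E}\,|L_k|$ precise inside the paper's model: one must verify that ``the tuple is consistent'' amounts to ``the stacked vector lies in a linear subspace whose dimension is exactly the rank $d_k$ of $V_{\pi(1)}+\cdots+V_{\pi(k)}$'' (not merely some quantity $\le kt$), and combine this with the model's joint law on the $b_{ij}$; if the right-hand sides of one equation are sampled without repetition, the first-moment computation is unaffected, since each single tuple is still uniform on $GF(q)^{D_k}$. A secondary point is the $O(|L_{k-1}|+s_{\pi(k)}+|L_k|)$ cost of one step: a naive pairing costs $|L_{k-1}|\cdot s_{\pi(k)}$, which would give the bound only up to the constant factor $q^{\,t-1}$, so one should use the overlap-hashing implementation to keep each step essentially linear in the list sizes. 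Note that the bound above holds for \emph{every} order $\pi$; unlike in the set system problem (\ref{1}), where reordering is essential, here no optimization over permutations is needed, and the MaxMinMax/permutation structure enters only if one seeks the sharper instance-dependent exponent $\min_\pi\max_{k}\bigl(\dim(V_{\pi(1)}+\cdots+V_{\pi(k)})-k\bigr)$.
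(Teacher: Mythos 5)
Your proposal is correct and follows essentially the same route as the paper: the paper combines Theorem \ref{Th_1} (the gluing algorithm with expected list size $q^{\mathbf{rank}(A_1,\ldots,A_k)-k}$ after $k$ gluings, derived from the $1/q$ membership probability of Lemma \ref{c1} and the product rule of Lemma \ref{Lemma_0}) with Lemma \ref{lemma} (the bound $\mathbf{rank}-k\le\min(n,kt)-k\le n-\lceil n/t\rceil$), which is exactly your two-step argument. The only cosmetic difference is that you bound $s_i\le q^{t_i-1}$ as if it were deterministic, whereas in the paper's model $q^{t_i-1}$ is only the expectation of $|S_i|$; the clean version, which the paper uses, computes $\mathbb{E}|L_k|$ directly as $q^{d_k}\cdot q^{-k}$ by summing the membership probability over the $q^{d_k}$ consistent stacked right-hand sides.
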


\begin{proof}
The proof of the statement follows directly from Theorem \ref{Th_1} and
Lemma \ref{lemma} that will be stated and proved in Section 2 and Section 3,
respectively.
\end{proof}

\section{Probabilistic Model}

We will use a generalization of the probabilistic model used in \cite{Igor}.
For $i=1,...,m$ we choose in random, independent and uniform way polynomials 
$g_{i}(y_{1},\ldots ,y_{t_{i}})$ over $GF(q).$ The degree of the polynomial
in each of its variables is at most $q-1$. The right-hand sides $%
\{b_{i_{1}},\ldots ,b_{i_{s_{i}}}\}$ in \textup{(\ref{system})} are zeroes
of polynomials $g_{i}(y_{1},\ldots ,y_{t_{i}})$ over $GF(q).$ So that {%
\textup{(\ref{system})}} is equivalent to a system of polynomial equations

\begin{equation*}
g_{i}(A_{i}X)=0,i=1,...,m.
\end{equation*}

\begin{lemma}
\label{c1}The probability of a fixed vector\/ $b\in GF(q)^{t}$ to be a zero
of a random polynomial\/ $g$ over\/ $GF(q)$ equals\/ $\frac{1}{q}.$
\end{lemma}

\begin{proof}
The set of the polynomials in $t$ variables over $GF(q)$ and of degree at
most $q-1$ in each of its variables is in one-to-one natural correspondence
with the set of all mappings from $GF(q)^{t}$ to $GF(q)$. The sought
probability equals $\frac{1}{q}$ as the overall number of such polynomials
(mappings) is $q^{q^{t}},$ and the number of the polynomials (mappings) $g$,
where $g(b)=0$, is $q^{q^{t}-1}$.
\end{proof}

\medskip

A random selection of elements from a set of cardinality $n$ with
probability $p$, yields a set whose expected cardinality is equal to $pn$.
In particular, for our problem under consideration we immediately obtain:

\begin{lemma}
\label{Lemma_01} Let\/ $AX\in S$ be a multiple right-hand side equation,
where\/ $A$ is of size\/ $t\times n$ and of rank\/ $t$. Suppose the
right-hand sides\/ $S$ are taken randomly such that\/ $\mathbf{Pr}(b\in S)=p$%
. Then the average size of\/ $S$ is\/ $p\,q^{t}$.
\end{lemma}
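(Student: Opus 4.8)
The plan is to obtain the statement by linearity of expectation over the finite set of all possible right-hand sides. First I would observe that since $A$ has exactly $t$ rows, every admissible right-hand side $b$ is a column vector in $GF(q)^{t}$, so the universe from which $S$ is drawn has precisely $q^{t}$ elements. The rank-$t$ hypothesis is not actually used in this step; it only guarantees that each such $b$ is genuinely realizable as $A X$ for some $X$, which is relevant elsewhere but not for the counting argument here.

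Next, for each $b \in GF(q)^{t}$ I would introduce the indicator random variable $\chi_{b}$ that equals $1$ if $b \in S$ and $0$ otherwise. By the hypothesis on the random selection, $\mathbf{E}[\chi_{b}] = \mathbf{Pr}(b \in S) = p$ for every $b$. Writing $|S| = \sum_{b \in GF(q)^{t}} \chi_{b}$ and applying linearity of expectation yields
\begin{equation*}
\mathbf{E}[\,|S|\,] \;=\; \sum_{b \in GF(q)^{t}} \mathbf{E}[\chi_{b}] \;=\; q^{t} \cdot p,
\end{equation*}
which is exactly the assertion. This is nothing more than the general principle recorded in the paragraph preceding the lemma, instantiated with a ground set of size $q^{t}$.

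I do not expect any real obstacle: the only point deserving an explicit sentence is that the relevant ground set has cardinality $q^{t}$, after which the conclusion is immediate and requires no independence assumption among the events $\{b \in S\}$. (Should one ever want an almost-sure bound on $|S|$ rather than merely its average, one would need a concentration inequality together with an independence or negative-correlation hypothesis; but since the lemma asks only for the expected value, linearity of expectation is all that is needed.)
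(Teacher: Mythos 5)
Your proof is correct and is essentially the paper's own argument: the lemma is stated there as an immediate instance of the general principle that random selection with probability $p$ from a ground set of size $n$ yields expected cardinality $pn$, applied to the ground set $GF(q)^{t}$ of size $q^{t}$. Your explicit linearity-of-expectation computation, and your remark that no independence among the events $\{b\in S\}$ is needed, just spell out what the paper leaves implicit.
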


For two equations $A_{1}X\in S_{1}$ and $A_{2}X\in S_{2}$, one can construct
an equation $AX\in S$ such that its solutions are precisely the common
solutions to the original two equations. In [6] the operation is called
gluing, this is a linear algebra generalization of gluing earlier introduced
in [7]. As above let $A_{i}$ be of size $t_{i}\times n$ and of rank $t_{i}$.
The matrix $A$ is constructed by writing $A_{2}$ under $A_{1}$ and
eliminating dependent rows. Therefore, $t=\mathbf{rank}(A)=\mathbf{rank}%
(A_{1},A_{2})$ is also the number of rows in $A$. Assume that $S_{1},S_{2}$
are randomly generated.

\begin{lemma}
\label{Lemma_0} Let\/ $p_{i}=\mathbf{Pr}(b_{i}\in S_{i})$ for any column
vectors\/ $b_{i}$ of size\/ $t_{i}$. If the sets\/ $S_{1}$ and\/ $S_{2}$ are
generated independently, then for any column vector\/ $b$ of size\/ $t$ we
have\/ $\mathbf{Pr}(b\in S)=p_{1}p_{2}$.
\end{lemma}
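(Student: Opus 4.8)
The plan is to describe the set $S$ of right-hand sides of the glued equation explicitly in terms of $S_1$ and $S_2$, and then compute the probability that a fixed vector $b$ lies in it. Recall that $A$ is obtained by stacking $A_2$ under $A_1$ and deleting dependent rows, so that the row space of $A$ equals the row space of $\binom{A_1}{A_2}$, and $AX\in S$ has exactly the common solutions of $A_1X\in S_1$ and $A_2X\in S_2$. Since $A_1$, $A_2$ have full row rank $t_1$, $t_2$, a vector $c$ is a possible value of $\binom{A_1}{A_2}X$ iff its first $t_1$ coordinates form some $u\in GF(q)^{t_1}$, its last $t_2$ coordinates form some $v\in GF(q)^{t_2}$, and the linear dependencies among the rows of $\binom{A_1}{A_2}$ impose compatibility constraints linking $u$ and $v$. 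Passing from $\binom{A_1}{A_2}$ to $A$ amounts to choosing a maximal set of independent rows; thus a vector $b\in GF(q)^t$ lies in $S$ precisely when the (unique) pair $(u,v)$ determined by $b$ via this correspondence satisfies $u\in S_1$ and $v\in S_2$.

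First I would fix a bijection making the previous paragraph precise: there is a surjective linear map $\varphi: GF(q)^t \to GF(q)^{t_1}\times GF(q)^{t_2}$ whose image is the set of compatible pairs, and $b\in S$ iff $\varphi(b)=(u,v)$ with $u\in S_1$, $v\in S_2$. Concretely, $\varphi$ records, for a candidate value $b$ of $AX$, what the induced values of $A_1X$ and $A_2X$ must be — this is well-defined because the deleted rows of $\binom{A_1}{A_2}$ are linear combinations of the retained ones. Then for a fixed $b$, the events $\{u\in S_1\}$ and $\{v\in S_2\}$ are exactly the events whose probabilities are $p_1$ and $p_2$ by hypothesis (here $u=u(b)$ and $v=v(b)$ are fixed vectors, so Lemma~\ref{c1} / the definition of $p_i$ applies), and since $S_1$ and $S_2$ are generated independently these two events are independent. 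Hence $\mathbf{Pr}(b\in S)=\mathbf{Pr}(u\in S_1)\cdot\mathbf{Pr}(v\in S_2)=p_1p_2$.

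The main obstacle is the bookkeeping in the first two paragraphs: one must argue carefully that membership $b\in S$ depends on $S_1$ only through the single event $u(b)\in S_1$ and on $S_2$ only through $v(b)\in S_2$, i.e. that the row-elimination step does not introduce any additional constraints or collapse distinct $b$'s in a way that would couple the two sets. Once that reduction is in hand, the probability computation is immediate from independence. I would therefore spend the bulk of the write-up setting up the correspondence $b\leftrightarrow(u(b),v(b))$ cleanly — for instance by picking coordinates on $GF(q)^t$ coming from a subset of rows of $\binom{A_1}{A_2}$ and expressing the remaining rows in terms of them — and then conclude in one line.
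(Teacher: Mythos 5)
Your proof is correct and follows essentially the same route as the paper's: both rest on the observation that a fixed $b\in S$ corresponds to exactly one pair $(b_1,b_2)=(u(b),v(b))$ with $b_1\in S_1$, $b_2\in S_2$ (because every row of $A_1$ and of $A_2$ lies in the row space of $A$), after which independence of the generation of $S_1$ and $S_2$ gives $\mathbf{Pr}(b\in S)=p_1p_2$. One minor slip: your map $\varphi$ is injective with image the set of compatible pairs, not surjective onto $GF(q)^{t_1}\times GF(q)^{t_2}$; this does not affect the argument.
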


\begin{proof}
Consider the system of linear equations $A_{1}X=b_{1},A_{2}X=b_{2}$. We can
express them as 
\begin{equation*}
\left( 
\begin{array}{c}
A_{1} \\ 
A_{2}%
\end{array}%
\right) X=\left( 
\begin{array}{c}
b_{1} \\ 
b_{2}%
\end{array}%
\right)
\end{equation*}%
If the system is consistent, then it is equivalent to $AX=b$ for some $b$.
Due to the equivalence, $b\in S$ is constructed from only one pair $b_{1}\in
S_{1}$ and $b_{2}\in S_{2}$. This proves the statement.
\end{proof}

\bigskip

The lemma implies that the average number of the right-hand sides (the size
of $S$) in $AX\in S$ is 
\begin{equation*}
p_{1}p_{2}\,q^{\mathbf{rank}(A_{1},A_{2})}.
\end{equation*}%
We say the system \textup{(\ref{system})} is solved if it is represented by
only one equation $AX\in S$. In turn, this is equivalent to solving $|S|$
systems of ordinary linear equations over $GF(q)$; its complexity is
neglected here.

Let $r_{t}$ denote the rank of all row vectors in $A_{1},A_{2},\ldots ,A_{t}$%
.

\begin{theorem}
\label{Th_1}Assume that the right-hand sides of the equations in\/ \textup{(%
\ref{system})} are generated in a random, independent, and a uniform way.
Then the average complexity of\/ \textup{(\ref{system})} is 
\begin{equation*}
O(m\max_{t}q^{r_{t}-t})
\end{equation*}
\end{theorem}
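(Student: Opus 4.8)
The plan is to analyze the Gluing Algorithm as it processes the equations of \textup{(\ref{system})} one at a time in a fixed order, tracking the expected size of the intermediate right-hand-side set after the first $k$ equations have been glued together. Write $A^{(k)}$ for the matrix obtained by stacking $A_1,\ldots,A_k$ and deleting dependent rows, so that $A^{(k)}$ has exactly $r_k$ rows, and let $S^{(k)}$ be the corresponding set of right-hand sides produced by the algorithm. By Lemma \ref{c1} each individual equation $A_iX\in S_i$ has $\mathbf{Pr}(b\in S_i)=1/q$ for every fixed $b\in GF(q)^{t_i}$, and by iterating Lemma \ref{Lemma_0} (gluing in $A_1$, then $A_2$, and so on) one gets $\mathbf{Pr}(b\in S^{(k)})=q^{-k}$ for every fixed $b\in GF(q)^{r_k}$. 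Hence by Lemma \ref{Lemma_01} the expected cardinality of $S^{(k)}$ is $q^{r_k-k}$, and by linearity of expectation the expected total number of vectors that the algorithm ever writes down over the course of the whole computation is $\sum_{k=1}^m q^{r_k-k}\le m\max_{1\le t\le m}q^{r_t-t}$.

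Next I would account for the per-step work: producing $S^{(k+1)}$ from $S^{(k)}$ and $S_{k+1}$ is, up to a polynomial-in-$n$ factor for the linear-algebra bookkeeping (row reduction of the stacked matrix, consistency checks, solving for the eliminated coordinates), proportional to $|S^{(k)}|\cdot|S_{k+1}|$, or more crudely bounded by a polynomial factor times $(\max_k |S^{(k)}|)^2$. Since we are stating the bound in $O(\cdot)$ form with the polynomial factors and the field size $q$ absorbed, and since the sizes $|S_i|$ of the original right-hand sides are themselves $q^{t_i-1}=O(\max_t q^{r_t-t})$ by Lemma \ref{Lemma_01} applied to a single equation (using $r_1=t_1$, and $r_t-t$ being the controlling quantity), one concludes that the expected running time over a random choice of right-hand sides is $O(m\max_t q^{r_t-t})$, which is the assertion.

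One subtlety worth spelling out is the independence needed to iterate Lemma \ref{Lemma_0}: the lemma as stated asks that $S_1$ and $S_2$ be generated independently, and here the successive gluings combine the already-glued set $S^{(k)}$ with the fresh set $S_{k+1}$. Because $S^{(k)}$ is a deterministic function of $S_1,\ldots,S_k$ while $S_{k+1}$ is drawn independently of all of these, the hypothesis of Lemma \ref{Lemma_0} is met at each step, and the events "$b'\in S^{(k)}$'' (for the appropriate sub-vector $b'$ of $b$) and "$b''\in S_{k+1}$'' are independent; this is exactly what the proof of Lemma \ref{Lemma_0} uses via the one-to-one correspondence between $b$ and the pair $(b',b'')$ on the consistent subspace. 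I expect this bookkeeping — carefully identifying which coordinates of a candidate vector $b$ come from the accumulated system and which from the new equation, and verifying that the correspondence stays one-to-one after each elimination of dependent rows — to be the main point requiring care, whereas the probability computation itself and the summation $\sum_k q^{r_k-k}\le m\max_t q^{r_t-t}$ are routine once the model is set up.
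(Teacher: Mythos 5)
Your core computation is exactly the paper's: iterate Lemmas \ref{c1}, \ref{Lemma_01} and \ref{Lemma_0} to get that the expected number of right-hand sides after the $k$-th gluing is $q^{r_k-k}$, then sum over $k$ to obtain $\sum_k q^{r_k-k}\le m\max_t q^{r_t-t}$. So the probabilistic part is the same approach, and it is fine (including your remark on independence, which the paper leaves implicit).

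Where you diverge --- and where a genuine gap appears --- is in the cost accounting for a single gluing. The paper takes from \cite{RS} that one gluing costs an amount \emph{linear} in the number of right-hand sides involved (the two inputs and the output), so the total cost is essentially $\sum_k\bigl(|S^{(k)}|+|S_{k+1}|+|S^{(k+1)}|\bigr)$, which the expectation computation controls. You instead bound the step cost by the \emph{product} $|S^{(k)}|\cdot|S_{k+1}|$ and then try to argue this away. That does not work as written: your fallback bound $(\max_k|S^{(k)}|)^2$ is the square of the target and cannot give $O(m\max_t q^{r_t-t})$; and your claim that $|S_{k+1}|$ has expected size $q^{t_{k+1}-1}=O(\max_t q^{r_t-t})$ is not justified in general --- $\max_t(r_t-t)$ only dominates $r_1-1=t_1-1$, not $t_{k+1}-1$ for arbitrary $k$ (e.g.\ many rank-one equations followed by one equation of large rank makes $q^{t_{k+1}-1}$ far exceed $\max_t q^{r_t-t}$). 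Even with independence of $S^{(k)}$ and $S_{k+1}$, the expected product $q^{r_k-k}\cdot q^{t_{k+1}-1}$ can exceed the claimed bound. To close the argument you must invoke the linear-time implementation of gluing (matching the two right-hand-side lists on their common coordinates by sorting or hashing), as the paper does by citation, rather than the quadratic all-pairs comparison.
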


\begin{proof}
The system is solved in aggregate by at most $m-1$ applications of gluing
operation. The complexity of one gluing is proportional to the number of
right-hand sides in the equations to glue and the number of the resulting
right-hand sides, see \cite{RS}. By Lemma \ref{c1} we know that a column
vector appears on the right-hand side in \textup{(\ref{system})} with
probability $1/q$. Further, by Lemmas \ref{Lemma_01} and \ref{Lemma_0},
after the $t$-th gluing the average number of the right-hand sides is $\frac{%
1}{q^{k}}\,q^{\mathbf{rank}(A_{1},\ldots ,A_{k})}$. The average complexity
of the system is then the sum of the average number of the right-hand sides
after each application: 
\begin{equation*}
\sum_{k=1}^{m}\frac{1}{q^{t}}\,q^{\mathbf{rank}(A_{1},\ldots ,A_{t})}\leq
m\max_{t}q^{r_{t}-t}.
\end{equation*}%
That proves the statement.
\end{proof}

\section{Corresponding Combinatorial Problem}

In this section we formulate a combinatorial problem related to an MRHS
system of equations. It turns out that the complexity of the problem can be
described by a generalization of the function $f_{t}(n,m)$ defined in (\ref%
{1}), namely the size of the union of the first $k$ sets is replaced by the
rank of vectors belonging to the first $k$ matrices.\ Formally, let $%
\mathcal{S}_{n,m,t,V}$ be the family of all collections of sets of $\mathbf{%
vectors}$ $\mathcal{X=}\{X_{1},...,X_{m}\}$ in an $n$-dimensional vector
space $V,$ over any finite on infinite field, under the restriction $%
\left\vert X_{i}\right\vert \leq t$ for all $i\in \lbrack m].$ We set 
\begin{equation}
F_{t}(n,m):=\max_{\mathcal{X}}\min_{\pi }\max_{1\leq k\leq m}(\mathbf{rank}%
\!\!\left( \dbigcup\limits_{i=1}^{k}X_{\pi (i)}\right) -k),  \label{3}
\end{equation}%
where the minimum runs over all permutations $\pi $ on $[m],$ and the
maximum is taken over all families $\mathcal{X}$ in $\mathcal{S}_{n,m,t,V}.$
We note that the definition of the function $F_{t}(n,m)$ reflects the fact
that the order of matrices $A_{i}^{\prime }$s is important in the Gluing
Algorithm.

Although functions $f_{t}(n,m)$ and $F_{t}(n,m)$ are defined in a similar
way, their behavior is dramatically different.

We start with a rather simple upper bound, used in the proof of Theorem \ref%
{Theorem_3}.

\begin{lemma}
\label{lemma}Let\/ $n,t$ be natural numbers. Then 
\begin{equation}
F_{t}(n,n)\leq n-\left\lceil \frac{n}{t}\right\rceil  \label{up}
\end{equation}
\end{lemma}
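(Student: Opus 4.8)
The plan is to prove the bound $F_t(n,n)\le n-\lceil n/t\rceil$ by exhibiting, for \emph{every} collection $\mathcal{X}=\{X_1,\dots,X_n\}$ in $\mathcal{S}_{n,n,t,V}$, a single permutation $\pi$ for which $\operatorname{\mathbf{rank}}(\bigcup_{i=1}^k X_{\pi(i)})-k\le n-\lceil n/t\rceil$ holds for all $k\in[n]$. First I would dispose of a trivial reduction: we may assume $\operatorname{\mathbf{rank}}(\bigcup_{i=1}^n X_i)=n$, since otherwise replacing $n$ by the actual dimension $n'<n$ of the total span only makes the right-hand side no smaller (the map $n\mapsto n-\lceil n/t\rceil$ is nondecreasing), so it suffices to treat the full-rank case. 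Under that assumption, for $k=n$ the quantity is $n-n=0\le n-\lceil n/t\rceil$, so the real content is bounding it for intermediate $k$.

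The key idea is a greedy/rank-increment argument. Build $\pi$ one step at a time: having chosen $X_{\pi(1)},\dots,X_{\pi(k-1)}$ with current span $U_{k-1}$, choose $X_{\pi(k)}$ to be \emph{any} remaining set that is not already contained in $U_{k-1}$ if one exists (so that the rank strictly increases at step $k$), and otherwise an arbitrary remaining set. Let $\rho_k=\operatorname{\mathbf{rank}}(U_k)$. Along the "productive" steps the rank goes up by at least $1$ and at most $t$ each time; along the (consecutive, by construction) "unproductive" tail steps it does not change. Since the total rank reaches $n$, there are at least $\lceil n/t\rceil$ productive steps, and crucially, because each productive step raises $\rho_k$ by at most $t$, after the first $j$ productive steps we have $\rho\le jt$; equivalently the $s$-th productive step occurs at a value of $k$ with $\rho_k\le st$ and $k\ge s$. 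The quantity to control is $g(k):=\rho_k-k$. On unproductive steps $\rho_k$ is constant while $k$ grows, so $g$ decreases — hence $g$ attains its maximum at the \emph{last} productive step, say the $r$-th one where $r$ is the total number of productive steps; there $\rho_k=n$ is not yet necessarily true, but more carefully: after all $r$ productive steps the span is everything, so $\rho=n$, $k=$ (number of steps so far) $\ge r+(\text{unproductive steps interspersed})$.

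So the cleaner bookkeeping is: at the moment the span first becomes all of $V$, we are at some step $k_0$, we have used $r$ productive steps with $r\ge\lceil n/t\rceil$, and $k_0\ge r$ (in fact the greedy choice forces all unproductive steps to come only when no productive move exists, so between the first and last productive step there are no unproductive steps, giving $k_0=r$ — I would verify this "no interleaving" claim, which follows because a set not contained in the current span remains not contained in any larger span, so productive moves are never blocked early). Then $g(k_0)=n-k_0=n-r\le n-\lceil n/t\rceil$, and for $k>k_0$ we have $g(k)=n-k<g(k_0)$, while for $k<k_0$ we have $\rho_k\le kt$... no — I'd bound $g(k)$ for $k<k_0$ directly: $\rho_k\le\min(n, kt)$, so $g(k)\le kt-k=k(t-1)$ for small $k$ and $g(k)\le n-k$ once $kt\ge n$; the maximum over $k$ of this piecewise bound is achieved near $k=\lceil n/t\rceil$ and equals $n-\lceil n/t\rceil$ (one checks $\lceil n/t\rceil(t-1)\le n-\lceil n/t\rceil$ iff $\lceil n/t\rceil\cdot t\le n$, which is false in general — so the $kt-k$ branch must instead be tightened using that productive steps exist and $k$ counts \emph{all} steps). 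The main obstacle, and the step I would spend the most care on, is precisely this interleaving/counting issue: making rigorous that with the greedy order the number of steps needed to reach rank $\rho$ is at most $\lceil\rho/t\rceil\le$ \dots and pinning down the worst intermediate $k$. I expect the honest argument is: $\operatorname{\mathbf{rank}}(\bigcup_{i=1}^k X_{\pi(i)})\le kt$ trivially, but also (by greedy, no early blocking) $\le (k)+(n - (\text{number of remaining productive moves}))$; combining, $g(k)\le n-\lceil n/t\rceil$ uniformly. I would write this out as a short lemma that greedy achieves $\rho_k\ge\min(n,\rho_{k-1}+1)$ until exhaustion, deduce the number of steps to full rank is exactly $r\ge\lceil n/t\rceil$ with no unproductive steps before step $r$, and conclude.
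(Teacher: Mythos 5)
Your instinct at the very end --- that the trivial bounds $\mathbf{rank}\bigl(\bigcup_{i=1}^{k}X_{\pi(i)}\bigr)\le kt$ and $\le n$ are all that is needed --- is exactly right, and it is in fact the paper's entire proof; but you then talk yourself out of it with an arithmetic slip and retreat to a greedy construction that you never finish. The slip is this: the branch $g(k)\le k(t-1)$ of your piecewise bound is only in force while $kt\le n$, i.e.\ for $k\le\lfloor n/t\rfloor$, so the quantity to evaluate is $\lfloor n/t\rfloor(t-1)$, not $\lceil n/t\rceil(t-1)$. Writing $n=st+r$ with $0\le r<t$, one gets $\lfloor n/t\rfloor(t-1)=s(t-1)=n-r-s\le n-\lceil n/t\rceil$, since $\lceil n/t\rceil\le r+s$ (check $r=0$, where $\lceil n/t\rceil=s$, and $r\ge1$, where $\lceil n/t\rceil=s+1\le s+r$). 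For $k\ge s+1$ one has $k\ge\lceil n/t\rceil$, hence $g(k)\le n-k\le n-\lceil n/t\rceil$. That is the whole argument, and it holds for \emph{every} permutation $\pi$, so the $\min_{\pi}$ in the definition of $F_t$ is not even used. Your perceived obstacle is therefore illusory.

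Consequently the greedy machinery is unnecessary, and as written it is also not watertight: your justification for the ``no interleaving'' claim (``a set not contained in the current span remains not contained in any larger span'') is false --- a set outside a subspace can perfectly well lie inside a larger one; the correct reason is that an unproductive step is taken only when \emph{all} remaining sets lie in the current span, after which the span never grows again, so unproductive steps form a suffix. The full-rank reduction at the start is harmless but equally unneeded, since $\mathbf{rank}\le\min(kt,n)$ already covers the degenerate case. In short: keep the observation $\rho_k\le\min(n,kt)$, redo the evaluation of the maximum of $\min(n,kt)-k$ with the split at $\lfloor n/t\rfloor$ rather than $\lceil n/t\rceil$, delete the rest, and you have the paper's proof.
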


\bigskip

\begin{proof}
Let $n=st+k,$ where $0\leq k<t$. We have 
\begin{equation*}
\mathbf{rank}(X_{1},\ldots ,X_{i})-i\leq i(t-1)\leq s(t-1)=n-k-s\leq
n-\lceil n/t\rceil
\end{equation*}%
for $i\leq s$. Moreover, 
\begin{equation*}
\mathbf{rank}(X_{1},\ldots ,X_{i})-i\leq n-\lceil n/t\rceil
\end{equation*}%
for $i\geq s+1$, as $i\geq \lceil n/t\rceil $ in this case, and the upper
bound (\ref{up}) follows.
\end{proof}

\bigskip

It turns out that bounds on $F_{t}(m,n)$ constitute a challenge for most
vector spaces $V$ even for $t=2$,$\,\ $although in the case of the function $%
f_{t}(n,m)$ we know its exact value in the case of $t=2$. Surprisingly, the
upper bound (\ref{up}) can be attained in many cases.

\begin{theorem}
Let\/ $F$ be any infinite field, or\/ $F=GF(q),$ the finite field with\/ $%
q\geq tn$ elements. Then for any\/ $n$ and\/ $t$ we have\/ $%
F_{t}(n,n)=n-\left\lceil \frac{n}{t}\right\rceil .$
\end{theorem}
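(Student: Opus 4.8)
The plan is to prove the lower bound $F_{t}(n,n)\ge n-\lceil n/t\rceil$, since the matching upper bound is already supplied by Lemma~\ref{lemma}. So it suffices to exhibit, for every $n$ and $t$, a family $\mathcal{X}=\{X_{1},\dots,X_{n}\}$ of sets of vectors in an $n$-dimensional space over $F$, each of size at most $t$, such that for \emph{every} permutation $\pi$ there exists some $k$ with $\mathbf{rank}(\bigcup_{i=1}^{k}X_{\pi(i)})-k\ge n-\lceil n/t\rceil$. Writing $n=st+k$ with $0\le k<t$, the target value is $s(t-1)+\max(k-1,0)$ roughly; the clean way to think about it is that we want a configuration where, no matter the order, after processing some prefix of $\ell$ sets the union already has rank $\ell+\big(n-\lceil n/t\rceil\big)$, i.e.\ each set on average contributes close to $t$ new dimensions for a long stretch.

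The construction I would try first: take a basis $e_{1},\dots,e_{n}$ of $V$ and let the first $s=\lfloor n/t\rfloor$ sets be the "coordinate blocks" $X_{j}=\{e_{(j-1)t+1},\dots,e_{jt}\}$ for $j=1,\dots,s$ (the last one possibly padded), which are mutually rank-independent and each of full size $t$. The remaining $n-s$ sets must be chosen so that they cannot be used early to "cheaply" pad out a prefix — the natural choice is to make them generic (in "general position"), e.g.\ $X_{j}$ for $j>s$ a set of $t$ vectors in general position, or even singletons $\{v_{j}\}$ with the $v_{j}$ chosen so that any small union of them together with a few of the blocks still has large rank. This is exactly where the field-size hypothesis enters: over an infinite field or over $GF(q)$ with $q\ge tn$, one can pick all the auxiliary vectors so that every subcollection of the $tn$-or-so relevant vectors is as independent as its cardinality and the block structure allow (a standard Schwartz–Zippel / "avoid finitely many hypersurfaces" argument). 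I would state this genericity as a preliminary lemma: there is a choice of the vectors for which $\mathbf{rank}$ of any union equals its "combinatorial maximum" given the forced dependencies among the $e_i$'s.

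Granting genericity, the heart of the argument is the permutation-adversary analysis. Fix any $\pi$. For a prefix of length $k$, let $a$ be the number of full coordinate-blocks $X_{1},\dots,X_{s}$ it contains and $b$ the number of the other sets; so $k=a+b$. By genericity, $\mathbf{rank}(\bigcup_{i=1}^{k}X_{\pi(i)})=\min(n,\, at + (\text{generic contribution of the } b \text{ sets, capped by } n-at))$, which for $k$ not too large is essentially $at+bt$ minus the small slack forced inside an incomplete block. I would then choose the critical prefix length — take $k$ to be the position right after the $\big(n-\lceil n/t\rceil\big)$-th set has been processed, or more precisely pick the smallest prefix whose rank reaches a suitable threshold — and check that $\mathbf{rank}-k$ at that point is at least $n-\lceil n/t\rceil$. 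The arithmetic should mirror the upper-bound computation in Lemma~\ref{lemma} run in reverse: each set contributes at most $t$ to the rank and at least (generically) enough that after $\approx n/t$ sets the rank is already $n$ or close, giving a deficit of exactly $\lceil n/t\rceil$ between $n$ and the number of sets used.

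The main obstacle I anticipate is handling the boundary/divisibility case $t\nmid n$ and, relatedly, making the "generic rank equals combinatorial maximum" claim precise enough that the adversary genuinely cannot do better than the bound — in particular, ruling out that a clever interleaving of an incomplete block with generic sets lets some prefix have rank-minus-index strictly larger than $n-\lceil n/t\rceil$ would contradict Lemma~\ref{lemma}, so consistency is guaranteed, but I need the construction to actually \emph{achieve} equality for the \emph{worst} $\pi$, which is the delicate direction. A secondary technical point is pinning down the exact size $q\ge tn$ (rather than some other polynomial bound) from the genericity argument; I expect the count of "bad" hyperplane/hypersurface coincidences to be comfortably below $q$ when $q\ge tn$, but this needs a clean verification, likely by induction on the number of auxiliary vectors added one at a time.
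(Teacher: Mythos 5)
Your overall strategy is the right one: exhibit $n$ sets of $t$ vectors each such that the $tn$ vectors are in ``general position'' (any $n$ of them linearly independent), so that every prefix of $i$ sets has rank exactly $\min(it,n)$ regardless of $\pi$, and then the arithmetic gives a deficit of exactly $n-\lceil n/t\rceil$ at $i=\lceil n/t\rceil$. This is precisely the shape of the paper's argument, and your computation of the target value $s(t-1)+\max(k-1,0)$ is correct. (Incidentally, the split into coordinate blocks plus generic sets is an unnecessary complication: once all $tn$ vectors are in general position, an arbitrary partition into $n$ sets of size $t$ works, and the permutation-adversary analysis becomes trivial because the rank profile is the same for every $\pi$.)

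The genuine gap is in how you propose to obtain general position, and it is exactly at the point you yourself flag as a ``secondary technical point.'' The Schwartz--Zippel / ``avoid finitely many hypersurfaces'' argument does \emph{not} give the bound $q\geq tn$. If you add the vectors one at a time, the $j$-th vector must avoid every subspace spanned by at most $n-1$ of the previously chosen vectors; there are $\sum_{i\leq n-1}\binom{j-1}{i}$ such subspaces, each of size up to $q^{n-1}$, so the counting argument only succeeds when $q$ exceeds a quantity that is exponential in $n$, not when $q\geq tn$. Over an infinite field your argument is fine, but for $F=GF(q)$ with $q\geq tn$ the probabilistic/generic existence claim fails as stated and must be replaced by an explicit construction. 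The paper's fix is to take the $tn\times n$ Vandermonde matrix built from $tn$ distinct field elements $\alpha_{1},\dots,\alpha_{tn}$ (this is where the hypothesis $q\geq tn$ enters): any $n$ of its rows form an $n\times n$ Vandermonde matrix with distinct nodes, hence are linearly independent. Splitting its rows arbitrarily into $n$ sets of size $t$ completes the lower bound. Without this (or some equivalent explicit ``any $n$ rows independent'' construction, e.g.\ an MDS-code generator matrix), your proof does not establish the theorem for the stated range of $q$.
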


\begin{proof}
By (\ref{up}) it suffices to bound $F_{t}(n,n)$ from below. Let 
\begin{equation*}
M=\left( 
\begin{array}{cccc}
1 & \alpha _{1} & \ldots & \alpha _{1}^{n-1} \\ 
1 & \alpha _{2} & \ldots & \alpha _{2}^{n-1} \\ 
\ldots &  &  &  \\ 
1 & \alpha _{tn} & \ldots & \alpha _{tn}^{n-1}%
\end{array}%
\right) .
\end{equation*}%
be a matrix of size $tn\times n$, where $\alpha _{i},i=1,\ldots ,tn$ are
pairwise different elements from the field $F.$ As $M$ is a Vandermonde
matrix, any $n$ rows of $M$ are linearly independent. To construct a desired
collection $\mathcal{A}=\{X_{1},...,X_{n}\}$ we split the rows of $M$ in an
arbitrary way into sets $A_{i}$ of size $t$. Let $n=st+k,$ where $0\leq k<t$%
. For any permutation $\pi $ and $i\leq s$ 
\begin{equation*}
\mathbf{rank}(X_{\pi (1)},\ldots ,X_{\pi (i)})-i=i(t-1).
\end{equation*}%
For $i\geq s+1$ 
\begin{equation*}
\mathbf{rank}(X_{\pi (1)},\ldots ,X_{\pi (i)})-i=n-i
\end{equation*}%
Therefore, if $k=0$, then the maximum difference is achieved at $i=s=\lceil 
\frac{n}{t}\rceil $. If $k>0$, then the maximum difference is achieved at $%
i=s+1=\lceil \frac{n}{t}\rceil $. So 
\begin{equation*}
\max_{\pi }\min_{i}\left( \mathbf{rank}(X_{\pi (1)},\ldots ,X_{\pi
(i)})-i\right) =n-\lceil \frac{n}{t}\rceil .
\end{equation*}%
The theorem follows.
\end{proof}

\section{Lower and Upper Bounds for Systems over $GF(2)$}

In this section we focus on the binary field, the field most important for
cryptographic applications. We start with an upper bound.

\begin{theorem}
For\/ $n$ sufficiently large, $F_{2}(n,n)\leq \frac{n}{2}-\frac{1}{8}\log
_{2}n.$
\end{theorem}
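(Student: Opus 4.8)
The plan is to establish the upper bound $F_2(n,n)\le \frac{n}{2}-\frac18\log_2 n$ by an extremal/counting argument on the worst-case collection $\mathcal{X}=\{X_1,\dots,X_n\}$ of $2$-element sets of vectors over $GF(2)$. Fix such a collection and associate to it a graph $G$ on the vertex set equal to the (at most $2n$) vectors appearing, with one edge for each $X_i$ (so $G$ has $n$ edges, loops allowed if $|X_i|=1$ or a vector repeats). The quantity $\mathbf{rank}(\bigcup_{i=1}^k X_{\pi(i)})-k$ depends on how the ranks of the spans of unions of edges grow; since we work over $GF(2)$, a set of vectors has rank strictly less than its cardinality exactly when it contains a nonempty subset summing to $0$. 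The key difference from the $f_t$ (pure set-union) world is that here distinct vertices may be linearly dependent, and this is what lets us beat $n/t = n/2$. The strategy is: first argue that, up to a harmless modification, we may assume no vertex is the zero vector and no two vertices are equal (each such coincidence can only help the minimizing permutation), so that the only dependencies come from $GF(2)$-linear relations among $\ge 3$ vectors; then show that the worst collections are, essentially, disjoint unions of small "gadgets" each of which is a cycle-like configuration where a block of $\ell$ edges spans only $\ell-1$ (or fewer) dimensions.

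Concretely, I would proceed as follows. Step 1: Reduce to connected components — $F_2$ is superadditive over a partition of the edge set into parts inducing vertex-disjoint configurations, and the minimizing $\pi$ handles components one at a time, so it suffices to understand the contribution of a single connected piece and then optimize the partition. Step 2: Within a connected piece $H$ with $e$ edges, bound $\min_\pi \max_k(\mathbf{rank}-k)$ from above in terms of $e$ and the dimension $d$ of the span of $H$: an ordering that builds up a spanning forest first gives $\mathbf{rank}\le d$ at every stage while $k$ ranges up to $e$, and careful insertion of the remaining edges shows the running maximum is at most roughly $d-1 - (\text{something})$; the point is that to gain one unit of "rank deficit" one must spend several edges, because over $GF(2)$ creating a linear dependency among vertices that are all pairwise distinct and nonzero requires a configuration on at least $3$ edges (a triangle of vectors $u,v,u+v$) and more generally each independent dependency costs at least two extra edges beyond a tree. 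Step 3: Feed the per-component bound into an optimization over all ways of writing $n = \sum e_j$; the convexity/concavity of the per-component bound forces the optimum to use components of a bounded size $O(\log n)$ — more precisely, a component on $e$ edges can contribute at most about $\frac{e}{2} - c\log e$ to the sum, and then summing over $\sum e_j = n$ and using concavity of $e\mapsto c\log e$ (so that many small equal pieces beat few large ones, but pieces cannot be too small or they contribute nothing) pins the bound at $\frac n2 - \frac18\log_2 n$ once the constants are tracked.

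The main obstacle will be Step 2: getting the right trade-off between the number of edges in a component and the achievable rank deficit, uniformly over the adversary's choice of configuration, and in particular proving the matching $\log$-type savings rather than just a constant saving. The subtlety is that the minimizing permutation must control the running maximum of $\mathbf{rank}(\bigcup_{i\le k}X_{\pi(i)})-k$ over \emph{all} prefixes simultaneously, not just the final value, so one needs an ordering strategy (likely: repeatedly peel off an edge incident to a current leaf, or an edge that does not increase the rank, maintaining an invariant like $\mathbf{rank}(\text{processed edges})-(\text{number processed})\le$ current bound) together with a potential-function argument showing the invariant is preserved. A secondary technical point is handling the zero vector, repeated vectors, and repeated sets $X_i$ cleanly in Step 1 so they genuinely only decrease $F_2$; and one must be careful that the adversary cannot exploit high-multiplicity dependencies (e.g. a large set of vectors of which every triple is dependent, as in an affine subspace) to do better than the triangle gadget — ruling this out, i.e. showing no dense $GF(2)$-configuration beats the disjoint-triangles-plus-trees construction, is the crux of the counting.
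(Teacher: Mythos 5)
There is a genuine gap, and it sits exactly where you place "the crux of the counting." Your graph $G$ has the vectors as vertices and records only \emph{coincidences} of vectors (shared endpoints); but over $GF(2)$ the adversary can make all $2n$ vectors pairwise distinct and nonzero, in which case $G$ is a perfect matching with $n$ trivial components, while the collection still has at least $n$ linear dependencies (since $2n$ vectors in an $n$-dimensional space have rank at most $n$). Graph connectivity therefore does not control rank, Step 1 is vacuous (the hard instances are either one giant component or $n$ singleton edges, and in neither case does the decomposition localize anything), and the per-component bound $\frac{e}{2}-c\log e$ of Step 2 is not even dimensionally right: a component with $e$ edges whose $2e$ vectors are independent contributes deficit $e$, not $e/2$ --- the $n/2$ in the theorem comes from the \emph{global} constraint $\mathbf{rank}\le n$, not from per-component edge counts. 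Your triangle gadget $\{u,v,u+v\}$ explains how an adversary creates a dependency cheaply, which is a lower-bound consideration; for the upper bound one must show that dependencies are \emph{unavoidable and exploitable by some ordering}, and nothing in the proposal does that.

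The paper's argument is quite different and you would not recover it by patching the above. It takes $k$ maximal with some $k$ sets having all $2k$ vectors independent; if $k<\frac n2-\frac18\log_2 n$ one is done by putting those sets first. Otherwise every remaining set contains a vector that is a combination of the $2k$ independent ones, giving at least $n-k\ge n/2$ such "dependent" vectors; each is encoded by a $k$-tuple over a $4$-letter alphabet recording which of $\mathbf v_i,\mathbf w_i$ it uses. With $t\approx\frac14\log_2 n$, pigeonhole on the first $t$ coordinates of these tuples produces $t+1$ sets whose dependent vectors pairwise differ only by combinations of vectors from $X_{t+1},\dots,X_k$; placing $X_{t+1},\dots,X_k$ first, then those $t+1$ sets (each now raising the rank by at most $1$), then $X_1,\dots,X_t$, drives the rank to $n$ early enough that the running maximum of $\mathbf{rank}-k$ is at most $\frac n2-\frac t2$. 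This pigeonhole on dependency signatures, together with the explicit reordering that spends the savings before the rank saturates, is the missing idea; your proposal defers it and the surrounding machinery does not supply a substitute.
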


\begin{proof}
Let $n$ be a fixed natural number. We choose $t$ such that 
\begin{equation*}
4^{t}2t+2\leq n<4^{t+1}2(t+1)+2
\end{equation*}

Then, for $n$ sufficiently large, $t>\frac{1}{4}\log _{2}n$. Let $\mathcal{A}%
=\{X_{1},...,X_{n}\}$ be a collection of sets $X_{i}=\{\mathbf{v}_{i}\mathbf{%
,w}_{i}\}$ comprising two binary vectors of length $n.$

Let $k$ be the largest number such that there exists sets $%
X_{i_{1}},...,X_{i_{k}}$ with $\mathbf{rank}((\dbigcup%
\limits_{j=1}^{k}X_{i_{j}})=2k.$ Clearly, $k\leq \frac{n}{2},$ and 
\begin{equation}
\mathbf{rank}((\dbigcup\limits_{j=1}^{k}X_{i_{j}}\cup X_{s})<2k+2  \label{c}
\end{equation}%
for all $s\in \{1,...,n\}-\{i_{1},...,i_{k}\}.$ For simplicity, assume that $%
\{i_{1},...,i_{k}\}=:\{1,...,k\}.$

We will consider two cases. First, let $k<\frac{n}{2}-\frac{1}{8}\log _{2}n.$
Then, for each $1\leq s\leq k,$ it is 
\begin{equation*}
\max\limits_{s}\mathbf{rank}(\dbigcup\limits_{j=1}^{s}X_{j})-s=2s-s\leq k.
\end{equation*}%
By definition of $k$ we have that for all $s>k,$ there is in $X_{i}$ at
least one vector that is a linear combination of vectors in $%
\dbigcup\limits_{i=1}^{k}X_{i}.$ Thus we get%
\begin{equation*}
\max\limits_{s}\mathbf{rank}(\dbigcup\limits_{j=1}^{s}X_{j})-s\leq
2k+(s-k)-s=k.
\end{equation*}%
Hence in this case $\Delta (\mathcal{X)}=:\min\limits_{\pi }\max\limits_{s}(%
\mathbf{rank}(\dbigcup\limits_{j=1}^{s}X_{j})-s)\leq k<\frac{n}{2}-\frac{1}{8%
}\log _{2}n.$

Now let $k\geq \frac{n}{2}-\frac{1}{8}\log _{2}n.$ As mentioned above, by
definition of $k,$ for each $i>k,$ there is in $X_{i}$ at least one vector
that is a linear combination of vectors in $\dbigcup\limits_{i=1}^{k}X_{i}.$
Let $D$ be a multiset that contains one such vector from each of sets $%
X_{i},i=k+1,...,n$. We note that some vectors might occur in $D$ more than
once. Hence, $\left\vert D\right\vert \geq n-k\geq \frac{n}{2},$ and each
vector in $D$ is a linear combination of vectors in $\dbigcup%
\limits_{i=1}^{k}X_{i}=\dbigcup\limits_{j=1}^{k}\{\mathbf{v}_{i},\mathbf{w}%
_{i}\}.$ To each vector $\mathbf{u}$ in $D$ we assign a $k$-tuple $T_{%
\mathbf{u}}=(x_{1},...,x_{k}),$ where $x_{i}\in \{\alpha ,\beta ,\gamma
,\delta \}$, such that $x_{i}=\alpha $ if neither of $\mathbf{v}_{i},\mathbf{%
w}_{i}$ occurs in the expression of $\mathbf{u}$ as a linear combination of
elements from $\dbigcup\limits_{i=1}^{k}X_{i},$ $x_{i}=\beta $ if $\mathbf{v}%
_{i}$ is there but $\mathbf{w}_{i}$ is not, $x_{i}=\gamma $ if $\mathbf{w}%
_{i}$ is there but $\mathbf{v}_{i}$ is not, and finally, $x_{i}=\delta $ if
both $\mathbf{v}_{i},\mathbf{w}_{i}$ occur there.

As $n\geq 4^{t}2t+2$ there are in $D$ at least $4^{t}t+1$ vectors. Of these
vectors at least $4^{t-1}t+1$ have the same first component, and of the
latter $4^{t-1}t+1$ vectors at least $4^{t-2}t+1$ coincide in the first two
components, etc. Thus, there are at least $t+1$ vectors, say $\mathbf{y}_{0}%
\mathbf{,y}_{1},...,\mathbf{y}_{t},$ that coincide in the first $t$
components. For each $i=1,...,t,$ we get $\mathbf{y}_{i}=\mathbf{y}_{0}%
\mathbf{+x},$ where $\mathbf{x}$ is a linear combination of vectors in $%
X_{t+1},...,X_{k}$. Without loss of generality, assume $\mathbf{y}_{i}\in
X_{k+1+i}$ for $i=0,1,...,t.$ Let $\pi $ be a permutation $%
(t+1,...,k+t+1,1,...,t,k+t+2,...,n).$ We are going to show that, for all $%
1\leq s\leq n,$ 
\begin{equation*}
\mathbf{rank}(\dbigcup\limits_{i=1}^{s}X_{\pi (i)})-s\leq \frac{n}{2}-\frac{t%
}{2}.
\end{equation*}

Consider a family $\mathcal{B}=\{B_{1},...,B_{n}\}$ that satisfies (\ref{c}%
), i.e., for all $s>k$

\begin{equation*}
\mathbf{rank}(\dbigcup\limits_{j=1}^{k}B_{j})=2k,\text{ and }\mathbf{rank}%
(\dbigcup\limits_{j=1}^{k}B_{j}\cup B_{s})<2k+2.
\end{equation*}%
Further, there is $z_{i}\in B_{k+1+i}$ for $i=0,...,t,$ such that, for all $%
i $ $=1,...,n\,\ $it is $z_{i}=z_{0}+x_{i},$ where $x_{i}$ is a linear
combination of vectors $\dbigcup\limits_{j=t+1}^{k}B_{j},$ such that 
\begin{equation}
\mathbf{rank}(\dbigcup\limits_{i=t+1}^{k+t+1}B_{i})=2k-2t+2+t=2k-t+2.
\label{a}
\end{equation}%
In addition, the collection $\mathcal{B}$ satisfies, for all $1\leq s\leq t,$%
\begin{equation}
\mathbf{rank}(\dbigcup\limits_{i=t+1}^{k+t+1}B_{i}\cup
\dbigcup\limits_{i=1}^{s}B_{i})=\min \{n,\mathbf{rank}(\dbigcup%
\limits_{i=t+1}^{k+t+1}B_{i})+2s\}.  \label{b}
\end{equation}%
We note that such a collection $\mathcal{B}$ exists to each collection $%
\mathcal{X}$ and it is not difficult to construct it. For each $1\leq s\leq
n,$ we have

\begin{equation*}
\mathbf{rank}(\dbigcup\limits_{i=1}^{s}X_{\pi (i)})\leq \mathbf{rank}%
(\dbigcup\limits_{i=1}^{s}B_{\pi (i)}),
\end{equation*}%
which in turn implies

\begin{equation*}
\mathbf{rank}(\dbigcup\limits_{i=1}^{s}X_{\pi (i)})-s\leq \mathbf{rank}%
(\dbigcup\limits_{i=1}^{s}B_{\pi (i)})-s.
\end{equation*}

Thus, $\Delta ($\QTR{cal}{X}$\mathcal{)}$ $\leq \Delta \mathcal{(B)}$. To
finish the proof we show that, for all $1\leq s\leq n,$ it is $\mathbf{rank}%
(\dbigcup\limits_{i=1}^{s}B_{\pi (i)})-s\leq \frac{n}{2}-\frac{t}{2}.$ For $%
1\leq s\leq k+t+1,$ by (\ref{a}) and (\ref{b}), 
\begin{equation*}
\mathbf{rank}(\dbigcup\limits_{i=1}^{s+1}B_{\pi (i)})\geq \min \{n,\mathbf{%
rank}(\dbigcup\limits_{i=1}^{s}B_{\pi (i)})+1\}.
\end{equation*}%
In addition, as $2(k+t+1)\geq 2(\frac{n}{2}-\frac{1}{8}\log _{2}n+\frac{1}{4}%
\log _{2}n+1)\geq n,$ it is 
\begin{equation*}
\mathbf{rank}(\dbigcup\limits_{i=1}^{k+t+1}B_{\pi (i)})=n.
\end{equation*}%
Therefore, for each $s\leq n-1,$ we have 
\begin{equation*}
\mathbf{rank}(\dbigcup\limits_{i=1}^{s+1}B_{\pi (i)})\geq \min \{n,\mathbf{%
rank}(\dbigcup\limits_{i=1}^{s}B_{\pi (i)})+1\}.
\end{equation*}%
Therefore, $\mathbf{rank}(\dbigcup\limits_{i=1}^{s}B_{\pi (i)})-s$ is a
non-decreasing function in $s$ and it achieves its maximum at the smallest
value of $s$ where $\mathbf{rank}(\dbigcup\limits_{i=1}^{s}B_{\pi (i)})=n.$
By (\ref{a}) and (\ref{b}) we get that the maximum value is achieved at $%
s=k+1+s_{0}$ where 
\begin{equation*}
s_{0}=\left\lceil \frac{1}{2}(n-2k-2+t)\right\rceil \leq t
\end{equation*}%
as we assume in this case that $k\geq \frac{n}{2}-\frac{1}{8}\log _{2}n.$

This in turn implies 
\begin{equation*}
\Delta \mathcal{(B)=}\mathbf{rank}(\dbigcup\limits_{i=1}^{s_{0}}B_{i}\cup
\dbigcup\limits_{i=t+1}^{k+t+1}B_{i})-(s_{0}+k+1)=n-(s_{0}+k+1)=\frac{n}{2}-%
\frac{t}{2}.
\end{equation*}

Therefore $\Delta ($\QTR{cal}{X}$\mathcal{)}\leq \Delta (\mathcal{B)\leq }%
\frac{n}{2}-\frac{\log _{2}n}{8}$ as $t>\frac{1}{4}\log _{2}n;$ i.e., $%
F_{2}(n,n)\leq \frac{n}{2}-\frac{1}{8}\log _{2}n$.
\end{proof}

\bigskip

Now we state two linear lower bounds on $F_{2}(n,n).$

\begin{theorem}
For all\/ $n$ sufficiently large, $F_{2}(n,n)\geq \frac{n}{9.0886}.$
\end{theorem}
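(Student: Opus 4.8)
# Proof Proposal for the Lower Bound $F_2(n,n) \geq n/9.0886$

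My plan is to construct an explicit family $\mathcal{X} = \{X_1,\ldots,X_n\}$ of $2$-element sets of vectors in $GF(2)^n$ for which every permutation $\pi$ forces $\mathbf{rank}(\bigcup_{i=1}^k X_{\pi(i)}) - k$ to exceed $n/9.0886$ at some index $k$. The natural model to try first is the one already used for $f_3(n,n)$ in \cite{HT}: identify vectors with a small gadget structure so that the rank of a union is controlled combinatorially. For pairs, the cleanest approach is to work with vectors of the form $e_a$ and $e_a + e_b$ (unit vectors and sums of two unit vectors), so that each set $X_i$ contributes in a graph-like or hypergraph-like fashion: the rank of a union of sets is then the size of the union of the coordinate-supports minus the number of independent linear dependencies generated. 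By carefully designing blocks of sets that share coordinates, one arranges that merging relatively few sets already saturates many coordinates (driving rank up fast), while the unavoidable "overlap" forced into any prefix of length $k$ keeps $\mathbf{rank} - k$ large.

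The key steps, in order, would be: (1) Fix a block gadget $G$ on a constant number $c$ of coordinates built from a constant number $d$ of pairs, engineered so that any ordering of the $d$ sets within the block has a prefix where rank-minus-index equals some target value $\rho$, and so that once the whole block is absorbed its $c$ coordinates are spanned. (2) Tile $GF(2)^n$ with $\approx n/c$ disjoint copies of $G$ (using $\approx d n/c$ sets), then pad to exactly $n$ sets and $n$ coordinates with a negligible correction term. (3) Show that for an arbitrary permutation $\pi$, there is a moment where a constant fraction of blocks are "partially opened," and use the per-block lower bound from step (1) together with an averaging/aggregation argument to conclude $\max_k(\mathbf{rank}(\bigcup_{i=1}^k X_{\pi(i)}) - k) \geq \rho \cdot (\text{fraction of blocks}) \cdot (n/c) - o(n)$. (4) Optimize the parameters $c,d$ of the gadget to make the resulting constant as close to $1/9.0886$ as possible; the constant $9.0886$ strongly suggests a specific small gadget was found by a finite search or small optimization, so I would enumerate candidate gadgets of size up to, say, $c,d \le 20$ and compute the resulting bound.

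The main obstacle I anticipate is step (3): controlling the worst permutation. Unlike the union-of-sets setting, rank is not monotone-and-modular in a fully transparent way — when two copies of $e_a + e_b$ appear in different blocks sharing a coordinate they can interact — so I would need to keep the blocks on genuinely disjoint coordinate sets, which wastes no rank but means the adversarial permutation can process whole blocks one at a time, completing each before opening the next. The fix is to make the gadget $G$ itself such that \emph{every} linear ordering of its $d$ internal sets has an early prefix (the first $j$ sets, for some $j < d$) with $\mathbf{rank} - j = \rho$; i.e. the bad index is forced \emph{inside} each block regardless of internal order. Then the global bad index for $\pi$ is obtained by looking at the first block (in $\pi$'s order) to be opened and noting that its internal bad prefix, combined with all fully-absorbed earlier blocks contributing $0$ each to $\mathbf{rank}-k$ but the currently-opening block contributing $\rho$, cannot be avoided — wait, this gives only $\rho = O(1)$, not linear. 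So the real design must force the bad prefixes of \emph{many} blocks to be simultaneously "open": the gadget should be such that its coordinates are spanned only by the \emph{last} set of the block in any ordering, so that a prefix of $\pi$ of the right length necessarily has $\Theta(n/c)$ blocks each missing their final set, each then contributing $\approx c - (d-1)\cdot(\text{something})$ to $\mathbf{rank}-k$. Balancing "coordinates spanned late" against "few sets per block" is exactly the optimization yielding the constant, and getting this interaction tight — rather than losing a factor of $2$ to slack — is where the real work lies.
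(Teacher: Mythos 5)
There is a genuine gap, and you have in fact put your finger on it yourself without resolving it. A tiling by gadgets on \emph{disjoint} coordinate sets cannot give a linear lower bound: against such a family the adversarial permutation simply processes one block completely before opening the next, so at every prefix at most one block is partially open and $\mathbf{rank}\bigl(\bigcup_{i=1}^{k}X_{\pi(i)}\bigr)-k$ never exceeds the $O(1)$ excess achievable inside a single block. Your proposed fix (designing the gadget so its coordinates are ``spanned only by the last set in any internal ordering'') does not change this, because the quantity being maximized is a \emph{maximum over prefixes}, not a sum over blocks; once a block is closed it contributes $0$ to every later prefix, and no local property of a constant-size gadget can force linearly many disjoint blocks to be simultaneously open. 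Any construction achieving a linear bound must make the $n$ sets interact globally. (The paper's other, harder lower-bound theorem does exactly this with random permutations and an expansion property; but that is not the route taken here.)

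The paper's actual proof is much more direct and sidesteps the permutation issue entirely: it chooses $c=0.2200557288$ with $H_2(c/2)<1/2$ and invokes the Gilbert--Varshamov bound to produce a binary linear code of length $N=2n$ with at most $n$ parity checks and minimum distance at least $d=\lceil cn\rceil$. The $2n$ rows of the (padded) $2n\times n$ parity-check matrix then have the property that \emph{any} $d-1$ of them are linearly independent. Splitting these rows arbitrarily into $n$ pairs $X_1,\dots,X_n$, every collection of $k=\lfloor (d-1)/2\rfloor$ pairs spans a space of dimension exactly $2k$, so for \emph{every} permutation $\pi$ the prefix of length $k$ satisfies $\mathbf{rank}\bigl(\bigcup_{i=1}^{k}X_{\pi(i)}\bigr)-k=2k-k=k\geq (cn-2)/2\geq n/9.0886$. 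The minimum over permutations is thus handled for free, because the linear-independence property is uniform over all $k$-subsets rather than tied to any ordering. This global ``any few rows are independent'' property --- i.e., large minimum distance --- is the key idea missing from your plan, and it is also where the constant comes from: $9.0886\approx 2/c$ with $c$ determined by the entropy condition $H_2(c/2)<1/2$, not by a finite gadget search.
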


\begin{proof}
First we note that it is easy to check that $c=0.2200557288$ satisfies the
inequality 
\begin{equation}
H_{2}(c/2)<1/2,
\end{equation}%
where $H_{2}$ is the binary entropy function.

To prove the statement we will show that for all $n$ sufficiently large
there exists a matrix $H$ over $GF(2)$ of size $2n\times n$ with the
property that any $d-1$ of its rows are linearly independent, where $%
d=\left\lceil cn\right\rceil $. The matrix $H$ will be constructed from a
parity-check matrix of a suitable linear code.

Let $N=2n$ and let $r$ be the natural number such that 
\begin{equation*}
2^{r-1}\leq \sum_{i=1}^{d-2}\binom{N-1}{i}<2^{r},
\end{equation*}%
It is well known, see e.g. Corollary 9, Chapter 10 in \cite{SM}, that for $%
0<\lambda <\frac{1}{2},$ it is%
\begin{equation}
\dsum\limits_{k=0}^{\lambda n}\binom{n}{k}\leq 2^{nH_{2}(\lambda )}.
\label{4}
\end{equation}%
By definition of $c$ we have $d-2\leq \frac{c}{2}(N-1).$ Applying (\ref{4})
and using $\frac{d-2}{N-1}\leq \frac{c}{2}$ we arrive at 
\begin{equation*}
2^{r-1}\leq \sum_{i=1}^{d-2}\binom{N-1}{i}\leq 2^{(N-1)H_{2}\left( \frac{d-2%
}{N-1}\right) }\leq 2^{(N-1)H_{2}\left( \frac{c}{2}\right) }\leq 2^{n-1}
\end{equation*}%
as $(N-1)H_{2}\left( \frac{c}{2}\right) \leq n-1$ for $n$ sufficiently
large. This in turn implies $r\leq n$ for those $n$. Therefore, by the
Gilbert-Varshamov bound, Theorem 12, Chapter 1 in \cite{SM}, there is a
binary linear code of length $N$ with at most $r$ parity checks, and the
minimum distance at least d. The theorem is proved by an explicit
construction of an $N\times r$ binary matrix $P$ such that no $d-1$ rows are
linearly dependent. To get a desired matrix $H$ we expand $P$ by arbitrary $%
n-r$ columns, and split rows of $H$ into a collection \QTR{cal}{X}$%
=\{X_{1},..,X_{n}\}$ of $n$ pairs of vectors. Any 
\begin{equation*}
\lfloor (d-1)/2\rfloor
\end{equation*}%
of such pairs constitute a set of $2\lfloor (d-1)/2\rfloor $ linearly
independent vectors. For $n$ sufficiently large we get 
\begin{equation*}
\min_{\pi }\max_{k}(\mathbf{rank}\left( \bigcup_{i=1}^{k}X_{\pi (i)}\right)
-k)\geq \lfloor (d-1)/2\rfloor \geq \frac{cn-2}{2}\geq \frac{n}{9.08861}.
\end{equation*}%
The proof is complete.
\end{proof}

\medskip

At the moment we do not have a conjecture about the asymptotic rate of
growth of the function $F_{2}(n,n).$ To indicate the difficulty of the
problem we present a family exhibiting that a linear lower bound on $%
F_{2}(n,n)$ can be obtained even by a very special system.

\begin{theorem}
For sufficiently large\/ $n\,,$ there is a positive constant\/ $c$ and a
family\/ $\mathcal{X=}\{X_{1},...,X_{n}\}$ of pairs of binary vectors, where
for all\/ $i\in \lbrack n],$ $\left\vert X_{i}\right\vert =2,$ and\/ $X_{i}$
contains a unit vector and a vector with exactly two non-zero coordinates,
such that 
\begin{equation*}
\min_{\pi }\max_{1\leq k\leq n}(\mathbf{rank}\!\!\left(
\dbigcup\limits_{i=1}^{k}X_{\pi (i)}\right) -k)\geq cn,
\end{equation*}%
where the minimum runs over all permutations on\/ $[n].$
\end{theorem}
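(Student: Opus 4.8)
The plan is to encode the family $\mathcal{X}$ by a graph $G$ on vertex set $[n]$, where each pair $X_i$ consists of a unit vector $e_{a_i}$ together with a ``difference'' vector $e_{a_i}+e_{b_i}$; the pair $X_i$ thus corresponds to the edge $\{a_i,b_i\}$ of $G$, with the understanding that the span of $X_i$ equals the span of $\{e_{a_i},e_{b_i}\}$. The key observation is that for a sub-collection indexed by $I\subseteq[n]$, the span of $\bigcup_{i\in I}X_i$ is exactly the coordinate subspace on the vertex set $V(G[I])$ spanned by the edges in $I$, so $\mathbf{rank}(\bigcup_{i\in I}X_i)=|V(G[I])|$, the number of vertices incident to at least one edge of $I$. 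Hence for a permutation $\pi$ the quantity to control is $\max_k\big(|V(G[\{\pi(1),\dots,\pi(k)\}])|-k\big)$; in particular, after all $n$ edges are processed we obtain $|V(G)|-n\le 0$ if $G$ is connected, so the maximum is attained at some intermediate stage, and it measures how ``spread out'' the reveal order is forced to be.

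First I would make the graph-theoretic reformulation precise and verify the rank identity (this is linear algebra over $GF(2)$ and is routine, using that the $e_a$ are a basis). Second, I would reduce $F_2$-type lower bounds on this special family to a purely combinatorial game on $G$: given $G$, an adversary orders the edges one at a time, and we want a graph for which \emph{every} ordering produces, at some point, an excess $|V(\text{revealed edges})|-(\text{number revealed})\ge cn$. Third, I would exhibit an explicit $G$ (or a random $G$ from a suitable model) with this property. The natural candidates are sparse high-girth graphs or sparse expanders: if $G$ has $n$ vertices, $n$ edges (so it is connected with exactly one cycle, or one can allow a bounded number of extra components/cycles), and good vertex-expansion, then any edge set $I$ with $|I|=k$ small must touch close to $k+1$ vertices (because few short cycles means locally it looks like a forest), and moreover expansion forces that one cannot simultaneously keep \emph{all} prefixes of the ordering ``dense.'' Concretely, take $k_0=\lfloor\varepsilon n\rfloor$; after $k_0$ edges are revealed, the revealed subgraph has at most one cycle per bounded-size obstruction, hence at least $k_0-O(1)$ vertices minus the number of independent cycles it contains; an expansion/girth hypothesis bounds the cycle count in any such $k_0$-edge subgraph, giving excess $\ge k_0-o(k_0)\ge cn$ for a suitable $c$.

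The main obstacle I expect is the following: unlike the earlier lower-bound theorems, here we are \emph{restricted} to this ``unit vector plus weight-two vector'' family, so the matrix $H$ cannot be taken to be a generic good-distance parity-check matrix — it is forced to have the structure of an oriented incidence-type matrix of a graph, which has lots of low-weight dependencies (every cycle of $G$ gives a dependency among the difference vectors). So the clean ``any $d-1$ rows independent'' argument is unavailable, and I must instead argue that, although many subsets of rows \emph{are} dependent, no edge ordering can avoid a large-excess prefix. This is really a min-max (adversary) statement about graphs and seems to require either a clever explicit construction with controlled cycle structure at all scales, or a probabilistic argument showing a random sparse graph defeats every ordering simultaneously (a union bound over the $n!$ orderings, or better over the $\binom{n}{k_0}$ possible prefix edge-sets, against the probability that a fixed $k_0$-set has too many cycles). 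Getting the constants to line up — enough expansion to force the excess, while keeping only $n$ edges so that the whole family lies in $\mathcal{S}_{n,n,2,V}$ — is the delicate part; once a suitable graph is in hand, translating its expansion parameter into the constant $c$ via the rank identity is straightforward.
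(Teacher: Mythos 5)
Your rank identity is correct, but it is exactly what defeats the approach. With $X_i=\{e_{a_i},\,e_{a_i}+e_{b_i}\}$ the span of $\bigcup_{i\in I}X_i$ is the coordinate subspace on $\bigcup_{i\in I}\{a_i,b_i\}$, so $\mathbf{rank}\bigl(\bigcup_{i=1}^{k}X_{\pi(i)}\bigr)-k=\bigl|\bigcup_{i=1}^{k}\{a_{\pi(i)},b_{\pi(i)}\}\bigr|-k$, and the quantity you must bound from below is precisely the set-union quantity governed by $f_2(n,n)$ from the introduction, which (as quoted there from \cite{HT}) equals $1$ whenever $m\ge n$. Concretely, for a connected graph with $n$ vertices and $n$ edges (hence unicyclic) order the edges by first traversing the unique cycle and then growing the spanning tree one edge at a time: every prefix satisfies $|V(G_k)|-k\le 1$; in the disconnected case the components containing cycles provide enough total ``decrease'' to absorb the $+1$ contributed by each tree component if they are interleaved suitably. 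Hence no choice of $G$ --- expander, high-girth, random, or otherwise --- can force a linear excess in your formulation. The obstacle you flag at the end (cycle dependencies among the weight-two vectors) is not a technical difficulty to be engineered around; it is a proof that the statement you are aiming at is false for every family of the special form you chose.

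The idea you are missing is that the unit vector in $X_i$ need not lie in the support of the weight-two vector, and that the unit vectors should repeat. The paper splits the coordinates into disjoint blocks $A,B,C$ of size $n$ each, chooses random permutations $\sigma,\tau$ of $[n]$, and sets $X_{3i-2}=\{a_i,\,b_i+c_i\}$, $X_{3i-1}=\{a_i,\,b_i+c_{\sigma(i)}\}$, $X_{3i}=\{a_i,\,b_i+c_{\tau(i)}\}$ (so $3n$ sets in dimension $3n$). Two effects are then played against each other: the shared unit vectors mean the $A$-part of the rank of a $k$-prefix is only the number of distinct indices used, not $k$; and the weight-two vectors form the graphic matroid of a bipartite multigraph $H$ on $B\cup C$, whose rank is $|V(H_k)|$ minus the number of components of $H_k$. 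An expansion property of the random $3$-regular-type graph $H$ (inherited from \cite{HT}) together with a first-moment bound on the number of short cycles forces this matroid rank to be large in every ordering, which is what yields the linear lower bound. This decoupling of the unit vector from the edge is the essential step that prevents the collapse to $f_2$, and without it no amount of expansion in your graph $G$ helps.
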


\begin{proof}
To simplify notation, we construct a family of $3n$ sets of vectors, rather
than just $n$ of them. That is, the vector space $V$ has dimension $3n$ and $%
\mathcal{X}=\{X_{1},...,X_{3n}\}$. We partition the set of $3n$ linearly
independent unit vectors into three sets: 
\begin{equation*}
A=\{a_{1},\dots ,a_{n}\},\quad B=\{b_{1},\dots ,b_{n}\},\quad
C=\{c_{1},\dots ,c_{n}\}.
\end{equation*}%
As in the proof of Theorem 14 in \cite{HT}, we select two permutations $%
\sigma $ and $\tau $ on $[n]$ at random, uniformly, and independently; that
is, any permutation on $[n]$ coincides with each of $\sigma $ and $\tau $
with probability 1/n!, and any ordered pair of permutations of $[n]$
coincides with $(\sigma ,\tau )$ with probability $(1/n!)^{2}$. The
following fact was proved in \cite{HT}, with explicit values\footnote{%
One of the goals in \cite{HT} was to find as good estimates on $c^{\prime
\prime }$ as possible, but in the present paper we only aim at proving that
a positive constant exists as lower bound.} of the constants $c^{\prime }$, $%
c^{\prime \prime }$, and $q$:

\begin{itemize}
\item[$(*)$] There exist positive constants $c^{\prime },c^{\prime \prime
},q $ such that the following property holds with probability at least $q$:
in every ordering of the family of 3-element (multi)sets 
\begin{equation*}
\left\{\{i, \sigma(i), \tau(i)\} \mid i = 1,\dots , n\right\}
\end{equation*}
the union of the first $\lfloor c^{\prime }n \rfloor$ sets has cardinality
at least $\lfloor c^{\prime }n \rfloor + c^{\prime \prime }n$.
\end{itemize}

We now consider the collection of sets of vectors $\mathcal{X}%
=\{X_1,\dots,X_{3n}\}$ over $A\cup B\cup C$ where, for $i = 1,\dots , n$,

$X_{3i-2} = \{ a_i , b_i+c_i \} , ~~ X_{3i-1} = \{ a_i , b_i+c_{\sigma(i)}
\} , ~~ X_{3i} = \{ a_i , b_i+c_{\tau(i)} \} . $

\noindent We will prove that this $\mathcal{X}$ satisfies the stated
inequality of the theorem (approximately for $c=\frac12 c^{\prime \prime }$)
with positive probability; this immediately implies that there exists a
suitable choice of $\mathcal{X}$.

First we observe some properties of the auxiliary random bipartite
(multi)graph $H$, constructed from the pair $(\sigma ,\tau )$, which has the
vertex set $V(H)=B\cup C$ and the edge set 
\begin{equation*}
E(H)=E_{0}\cup E_{\sigma }\cup E_{\tau }
\end{equation*}%
where 
\begin{equation*}
E_{0}=\{b_{i}c_{i}\mid i=1,\dots ,n\},~~E_{\sigma }=\{b_{i}c_{\sigma
(i)}\mid i=1,\dots ,n\},~~E_{\tau }=\{b_{i}c_{\tau (i)}\mid i=1,\dots ,n\}.
\end{equation*}%
We call an edge of $H$ a 0-edge, or $\sigma $-edge, or $\tau $-edge, if it
is in $E_{0}$, or $E_{\sigma }$, or $E_{\tau }$, respectively.\bigskip

\noindent \emph{Claim 1.} For every $\ell\in\mathbb{N}$ there exists a
constant $d_\ell$ such that the expected number of cycles of length $2\ell$
in $H$ is at most $d_\ell$.

\medskip

\noindent \emph{Proof.} Consider any cycle, say $C^*$, of length $2\ell$ in $%
H$. Quantitatively it can be associated with a triplet $(\ell_0,\ell_{%
\sigma},\ell_{\tau})$ which represents that $C^*$ has $\ell_0$ 0-edges, $%
\ell_{\sigma}$ $\sigma$-edges, and $\ell_{\tau}$ $\tau$-edges. For every
fixed $\ell$ there exist a bounded number of cycle types with respect to the
positions of the three different kinds of edges --- the number of
possibilities for prescribing the edge types around $C^*$ is clearly smaller
than $3\cdot 2^{2\ell}$ as each of $0,\sigma,\tau$ must correspond to a
matching, but the actual exact number is unimportant for our purpose. There
are ${\binom{n}{\ell}} = O(n^\ell)$ ways to specify the set $V(C^*)\cap B$,
and those $\ell$ vertices can occur in $\ell!/2$ different orders along $C^*$%
. Once an order and a reference vertex of $C^*$ are fixed, they specify the
positions of 0-edges, and so they also determine $\ell_0$ vertices of $C^*$
in the vertex class $C$. Thus, the independent and uniform random selection
of $\sigma$ and $\tau$ implies that there are ${\binom{n-\ell_0}{\ell-\ell_0}%
} = O(n^{\ell-\ell_0})$ ways to choose the other $\ell-\ell_0$ vertices of $%
C^*$ (and $(\ell-\ell_0)!$ ways to permute them). Consequently the number of
choices for $V(C^*)$ is $O(n^{2\ell-\ell_0})$. For any fixed selection of
those $2\ell$ vertices with their fixed order and specified positions of the
edge types around the cycle, the probability that the prescribed edges are
present in $H$ is equal to $\left( \prod_{j=0}^{\ell_{\sigma}-1} (n-j) \cdot
\prod_{k=0}^{\ell_{\tau}-1} (n-k) \right)^{-1} =
O((n^{\ell_{\sigma}+\ell_{\tau}})^{-1}) = O(n^{-(2\ell-\ell_0)})$ as $n$
gets large (where the `hidden coefficient' in `$O$' depends on~$\ell$). The
expected number of cycles of a given length is not larger than the product
of the probability and the number of possible choices, thus it does not
exceed a suitably chosen constant $d_{\ell}$. \quad $\Diamond$

\bigskip

Let us fix now a positive constant $q$ as in $(*)$.

\bigskip

\noindent \emph{Claim 2.} For every $\ell\in\mathbb{N}$ it has probability
at least $q/2$ that $(*)$ holds simultaneously with the property that, for 
\underline{every} $k\le\ell$, the number of cycles of length $2k$ in $H$ is
at most $2\ell d_k/q$.

\medskip

\noindent \emph{Proof.} It is an elementary fact in probability theory that
every positive-valued random variable with expectation $\mathbb{E}(\xi)$
satisfies the inequality $\mathbb{P}(\xi>s\cdot\mathbb{E}(\xi))<1/s$ for
every $s>1$. Applying this for $s=2\ell/q$ and $k=1,\dots,\ell$, the
assertion follows. \quad $\Diamond$

\bigskip

Returning to the proof of the theorem, we combine $(*)$ with Claim 2 and
observe that the following event has probability at least $q/2$:

\begin{itemize}
\item[$(**)$] If $\ell $ is fixed (arbitrarily) and $n$ is sufficiently
large (with respect to $\ell $) then the number of cycles of length at most $%
2\ell $ in $H$ is bounded from above by some constant $f(\ell )$; moreover,
each $Y\subset B$ with $|Y|=\lfloor c^{\prime }n\rfloor $ has at least $%
\lfloor c^{\prime }n\rfloor +c^{\prime \prime }n$ neighbors in $C$.
\end{itemize}

So, we choose $\mathcal{X}$ (determined by a suitable choice of $\sigma$ and 
$\tau$) accordingly, and assume from now on that $H$ satisfies $(**)$.

Let $\pi $ be any permutation of $\{1,\dots ,3n\}$, say $\pi =(i_{1},\dots
,i_{3n})$, and consider the sequence $X_{i_{1}},\dots ,X_{i_{3n}}$ of sets
of vectors. For any $k=1,\dots ,3n$, in the rest of this proof, with a
slight abuse of notation let $b_{\pi (k)}$ mean the vertex $b_{\lceil
i_{k}/3\rceil }$, that corresponds to the vector in the $B$-component of $%
X_{k}$. In order to define $c_{\pi (k)}$ with an analogous meaning, we write 
$k$ in the form $k=3i-3+j$ where $j\in \{1,2,3\}$. Then let $c_{\pi
(k)}=c_{i}$ if $j=1$, $c_{\pi (k)}=c_{\sigma (i)}$ if $j=2$, and $c_{\pi
(k)}=c_{\tau (i)}$ if $j=3$. Now we set 
\begin{equation*}
B^{\leq k}=\{b_{\pi (j)}\mid 1\leq j\leq k\}.
\end{equation*}%
%
%
%
%
That is, $B^{\leq k}$ 
represents those unit vectors in $B$ 
which have a contribution to the first $k$ sets of vectors. 

This also identifies the pairs $b_{\pi(j)}c_{\pi(j)}$ for $j=1,\dots ,k$,
which we may view as the edges of a bipartite (multi)graph $H_k$. Denoting
by $k_1$, $k_2$, and $k_3$ the number of vertices of $H_k$ in $B$ which have
degree 1, 2, and 3, respectively, the equality $k=k_1+2k_2+3k_3$ is valid.
If $k=1$ then $k_2=k_3=0$, and if $k=3n$ then $k_3=n$; moreover, the sum $%
k_2+k_3$ either remains unchanged or increases by exactly 1 when we increase 
$k$ by 1. Thus, we can and will fix a value $k$ for which $k_2+k_3=\lfloor
c^{\prime }n \rfloor$ holds in the graph $H_k$, where the positive constant $%
c^{\prime }$ is the one from $(*)$ and $(**)$.

We consider two modifications of $H_k$. The first one, denoted by $H_k^-$,
is the subgraph of $H_k$ that belongs to the subsequence obtained by
removing those $k_1$ sets $X_{i_j}$ which correspond to the degree-1
vertices of $H_k$. The second one, denoted by $H_k^+$, is the supergraph of $%
H_k^-$ obtained by inserting those $k_2$ edges of $H$ which correspond to
later sets $X_{i_j}$ in which a degree-2 vertex of $H_k$ occurs. Hence both $%
H_k^-$ and $H_k^+$ have $k_2+k_3$ vertices in $B$; moreover, the number of
edges in $H_k^-$ is $2k_2+3k_3$, and the number of edges in $H_k^+$ is $%
3k_2+3k_3$.

Due to the partition $(A,B\cup C)$ concerning $\mathcal{X}$, the rank is 
a sum of two numbers, namely of the rank in $A$ and in $B\cup C$. In $A$ it
is just $k_1+k_2+k_3$, independently of the situation in $B\cup C$.
Moreover, 
linear independence for the vectors $b_{\pi(j)} + c_{\pi(j)}$ inside $B\cup
C $ 
equivalently means that the corresponding subgraph is cycle-free. 
Thus, writing $comp(G) $ to denote the number of connected components in a
graph $G$, we have 
\begin{equation}  \label{rankeqn}
\mathbf{rank}\!\! \left(\dbigcup\limits_{i=1}^{k}X_{\pi (i)}\right) =
(k_1+k_2+k_3) + (|V(H_k)|-comp(H_k)) .
\end{equation}
It will simplify the computation if we modify the formula from $H_k$ to $%
H_k^+$ as 
\begin{eqnarray}  \label{HtoH+}
|V(H_k)|-comp(H_k) & = & k_1 + |V(H_k^-)|-comp(H_k^-)  \notag \\
& \ge & k_1 + |V(H_k^+)| - comp(H_k^+) - k_2
\end{eqnarray}
which is valid because the $k_1$ pendant edges are not involved in any
cycles, and each of the $k_2$ additional edges of $E(H_k^+)\setminus
E(H_k^-) $ may or may not increase the rank by 1 (at most), but they can
never decrease it. The combination of (\ref{rankeqn}) and (\ref{HtoH+})
yields 
\begin{equation}  \label{rank-H+}
\mathbf{rank}\!\! \left(\dbigcup\limits_{i=1}^{k}X_{\pi (i)}\right) \ge
2k_1+k_3 + |V(H_k^+)| - comp(H_k^+) ,
\end{equation}
so the problem is reduced to finding an appropriate lower bound on the
difference $|V(H_k^+)| - comp(H_k^+)$.

We are going to give two kinds of lower bounds, their combination will prove
the theorem. First, recall that there are $k_2+k_3$ vertices in $B^{\le k}$
which have degree at least 2 in $H_k$. All of them have degree 3 in $H_k^+$,
therefore we can apply $(**)$ and obtain 
\begin{equation}  \label{low-cn}
|V(H_k^+)| \ge 2k_2+2k_3+c^{\prime \prime }n .
\end{equation}
For a different lower bound, we partition $H_k^+$ into the following three
parts:

\begin{itemize}
\item $H^1$ -- tree components with at most $2\ell$ vertices for an integer $%
\ell$ to be defined later,

\item $H^2$ -- components containing at least one cycle and having at most $%
2\ell$ vertices,

\item $H^3$ -- components with more than $2\ell$ vertices.
\end{itemize}

For $j=1,2,3$ let us denote $k_{\langle j\rangle} := |B\cap V(H^j)|$. Then,
in particular, we have 
\begin{equation*}
k_{\langle 1\rangle} + k_{\langle 2\rangle} + k_{\langle 3\rangle} = k_2 +
k_3 .
\end{equation*}

If a component $F$ of $H_k^+$ is a tree, then it satisfies the equality $%
|C\cap V(F)| = 2 |B\cap V(F)| + 1$; we shall use this fact for the
components in $H^1$. Moreover, due to the presence of 0-edges $b_ic_i$, the
subgraph $H^2\cup H^3$ has at least as many vertices in $C$ as in $B$.
Consequently, since $k_{\langle 1\rangle} \ge comp(H^1)$, we obtain: 
\begin{eqnarray}  \label{low-comp}
|V(H_k^+)| & \ge & 3k_{\langle 1\rangle} + comp(H^1) + 2(k_{\langle
2\rangle} +k_{\langle 3\rangle} )  \notag \\
& \ge & 2k_2 + 2k_3 + 2 comp(H^1) .
\end{eqnarray}
Taking the arithmetic means on both sides of the equations (\ref{low-cn})
and (\ref{low-comp}) yields: 
\begin{equation}
|V(H_k^+)| \ge 2k_2+2k_3+ {\textstyle\frac12} c^{\prime \prime 1}) .  \notag
\end{equation}
Substituting this formula into (\ref{rank-H+}), moreover recalling that $%
k=k_1+2k_2+3k_3$ and noting that $comp(H_k^+) = comp(H^1) + comp(H^2) +
comp(H^3)$, the following lower bound is derived: 
\begin{equation}  \label{rank-aver}
\mathbf{rank}\!\! \left(\dbigcup\limits_{i=1}^{k}X_{\pi (i)}\right) \ge k +
k_1 + {\textstyle\frac12} c^{\prime \prime 2}) - comp(H^3) .
\end{equation}
The property concerning short cycles, as described in $(**)$, immediately
implies 
\begin{equation}
comp(H^2) \le f(\ell)  \notag
\end{equation}
and, since the components of $H^3$ are large by definition, due to the
choice $k_2+k_3=\lfloor c^{\prime }n \rfloor$ we also have 
\begin{equation}  \label{utso}
comp(H^3) < \frac{c^{\prime }n+n}{2\ell} < \frac{n}{\ell} .  \notag
\end{equation}
Choosing $\ell$ large (e.g., $\ell=\lceil 3/c^{\prime \prime }\rceil$), the
latter two inequalities together with~(\ref{rank-aver}) complete the proof.
\end{proof}

\bigskip

\noindent \textbf{Acknowledgement.} The authors are indebted to Noga Alon
for discussions on expanders and on probabilistic methods, which lead to an
improvement of the lower bound in Theorem \ref{bounds}, and to \O yvind
Ytrehus for a discussion on the Gilbert-Varshamov bound. 

\end{document}